\theoremstyle{plain}
\newtheorem{thm}{Theorem}[section]
\newtheorem{lem}[thm]{Lemma}
\newtheorem{cor}[thm]{Corollary}
\newtheorem{rem}{Remark}
\newcommand{\R}{{\mathbb R}}
\newcommand{\Z}{{\mathbb Z}}
\title[Two weight norm inequalities]{Two weight norm inequalities for  vector-valued operators}
\begin{document}
\author{Carme Cascante and Joaquin M. Ortega}
\address{Carme Cascante and Joaquin M. Ortega: Dept.\ Matem\`atica Aplicada i An\`alisi, Universitat  de Barcelona, Gran Via 585, 08071 Barcelona, Spain}
\email{cascante@ub.edu,\quad ortega@ub.edu}
%\keywords{}
%\subjclass[2010]{}
\keywords{ vector-valued operator, two weight inequalities, mixed-norm spaces.}
\subjclass[2010]{47B38,  42B25, 46E40, 47G40}

\date{\today}
\thanks{Partially supported by  DGICYT Grant MTM2011-27932-C02-01  and DURSI Grant 2014SGR 289.}

\begin{abstract} 
We study  two weight norm inequalities for a vector-valued operator  from a weighted $L^p(\sigma)$-space to mixed norm $L^q_{l^s}(\mu)$ spaces, $1<q<p$. We apply these results to the boundedness of Wolff's potentials.
\end{abstract}
\maketitle
  \section{Introduction}  

The object of this paper is the study of two weight norm inequalities for a vector-valued operator  from a weighted $L^p(\sigma)$-space to mixed norm $L^q_{l^s}(\mu)$ spaces, $1<q<p$. More precisely, we study the following problem:
Let ${\mathcal D}$ be the standard dyadic system in $\R^n$, given by ${\mathcal D}:=\{ 2^{-k}([0,1)^n+j);\, k\in\Z,\, j\in \Z^n\}$,   $(\lambda_Q)_{Q\in {\mathcal D}}$ a sequence of nonnegative real numbers, let $T$ be the operator defined by
$$T(f)=\left( \lambda_Q\left(\int_Q fd\sigma\right)\chi_Q\right)_{Q\in {\mathcal D}},$$
where $\chi_Q$ denotes the characteristic function of $Q$. Given $1\leq s\leq +\infty$, $1<p,q<+\infty$, which are the pair of positive Borel measures $\mu$, $\sigma$ on $\R^n$ such that for any nonnegative function $f$,
 \begin{equation}\label{problem2}
  \|T(f)\|_{L^q_{l^s}(\mu)}=\|\left(\sum_{Q\in {\mathcal D}}\lambda_Q^s \left( \int_Q fd\sigma \right)^s \chi_Q\right)^\frac1{s} \|_{L^q(\mu)}\leq C \|f \|_{L^p(\sigma)}?
 \end{equation}
The possible characterizations of \eqref{problem2} depend heavily on the size of the parameters $s,p$ and $q$ involved.

For the range $1<p\leq q<\infty$, the problem is rather well understood.

If $s=1$, the operator ${\displaystyle f\rightarrow \left(\sum_{Q\in {\mathcal D}}\lambda_Q^s \left( \int_Q fd\sigma \right)^s\chi_Q\right)^{1/s}}$ is linear and its boundedness is characterized by the Sawyer testing condition (testing the boundednes of the operator and its adjoint against characteristic functions of cubes). See  \cite{nazarovtreilvolberg}, \cite{laceysawyeruriartetuero} \cite{treil} and \cite{hytonen}.

If $s=\infty$, the boundedness is characterized by the direct Sawyer condition \cite{sawyer}.

If $1<s<\infty$ and $s\geq p$ it was characterized also by the direct Sawyer's conditions in \cite{cascanteortega} and for $s<p$ it was reduced to the case $s=1$.

 The case $1<s<\infty$ and $q\geq p$  was  considered by J. Scurry in \cite{scurry}, extending M.T. Lacey, E.T. Sawyer and I. Uriarte-Tuero's proof of the case $s=1$. For $p=q$,  a different approach, which is closer to the methods used in the proof of one of our main results, was considered by T.S. H\"anninen in \cite{hanninen}, adapting T.P. Hyt\"onen's proof for the case $s=1$ (see \cite{hytonen}), based on a parallel stopping cubes method. Namely, he proved that if $q=p$ and $s>1$, and if we denote ${\displaystyle T_P(f):=  \left( \lambda_Q \left(\int_Q fd\sigma\right)\chi_Q\right)_{\substack{ Q\subset P,\\Q\in {\mathcal D}}}}$ and $T^*_P$ its formal adjoint defined by $ {\displaystyle T^*_P(g):= \sum_{\substack{ Q\subset P,\\Q\in {\mathcal D}}}}\lambda_Q \left(\int_Q g_Q d\mu \right)\chi_Q$, then
 \eqref{problem2} holds if and only if both of the following conditions are satisfied:
\begin{enumerate}
\item For all $P\in{\mathcal D}$, ${\displaystyle\|T_P(\sigma)\|_{L_{l^s}^p(\mu)}\leq C \sigma(P)^{\frac1{p}}}$.
\item For all $g=(g_Q\chi_Q)_{Q\in{\mathcal D}}$, $g_Q\geq 0$, $${\displaystyle\|T_P^*(gd\mu)\|_{L^{p'}(\sigma)} \leq C \|g\|_{L_{l^{s'}}^\infty(\mu)}\mu(P)^\frac1{p'}}.$$
\end{enumerate}

The approach was further extended to an abstract Banach valued setting in \cite{hanninen1}.

The upper triangle case $q<p$ in the linear case $s=1$ was considered in \cite{cascanteortegaverbitsky} assuming some extra conditions on the operator and the measure $\sigma$, conditions that were removed  by H. Tanaka in  \cite{tanaka}. We also recall that in  \cite{hanninenhytonenli} (see also \cite{vuorinen}) was obtained a unified  characterization which do not depend in the relative position of $p$ and $q$.
Namely, for the particular case that $q<p$, the characterization obtained by T.S. H\"anninen, T.P. Hyt\"onen and K. Li in  \cite{hanninenhytonenli} was the following:
The operator $T$ is bounded from $L^p(\sigma)$ to  $L^q(\mu)$ if and only if the following two conditions hold:

\begin{enumerate}
\item  $$\sup_{\mathcal F} \left\| \left\{  \frac{\|T_F(\sigma)\|_{L^q(\mu)}}{\sigma(F)^{\frac1{p}}}\right\}_{F\in{\mathcal F}} \right\|_{l^r({\mathcal F})}\leq C,$$
\item $$\sup_{\mathcal G} \left\| \left\{  \frac{\|T_G^*(\mu)\|_{L^{p'}(\sigma)}}{\mu(G)^{\frac1{q'}}}\right\}_{G\in{\mathcal G}} \right\|_{l^r({\mathcal G})}\leq C,$$
\end{enumerate}
where the supremums are taken over all subcollections ${\mathcal F}$ and ${\mathcal G}$ of ${\mathcal D}$ that are sparse (in the sense of the definition given below) with respect to $\sigma$ and $\mu$ respectively and $1/r=1/q-1/p$ and the constants are independent of ${\mathcal F}$ and ${\mathcal G}$. In the same paper it is obtained a characterization for the maximal dyadic function ($s=\infty$) and $q<p$ (see also \cite{hanninenthesis}).

In this paper we will obtain two type of results. On one hand, the first characterization follows the approach of the works in \cite{hanninen} and \cite{hanninenhytonenli}. However, instead of the cases consider in \cite{hanninen} ($p=q$), we assume that $q<p$ and  unlike the case  in \cite{hanninenhytonenli} ($s=1$), the operator considered here maps $L^p(\sigma)$ to $L_{l^s}^q(\mu)$, where $s\neq 1$, situation that gives that $T$ and $T^*$ are not symmetric. On the other hand, our second characterization follows an approach which is, in some sense, more original. It uses a reduction to the case $q=1$ and a proof based on duality.

  We will see in the next section that the cases where $s\leq q<p$ can be reduced to  $s=1$ and, in consequence, to the linear case considered in \cite{tanaka}, \cite{cascanteortegaverbitsky3} and  in \cite{hanninenhytonenli}. Hence, we limit ourselves to the case where $q<p$ and $s>q$. Before we state the main results, we will need to introduce some notations. 
	
A family of dyadic cubes ${\mathcal F}$ is $\sigma$-sparse (or sparse with respect to $\sigma$), if for each $F\in{\mathcal F}$, there exists $E_F\subset F$ such that $\sigma(E_F)\geq (1/2) \sigma(F)$ and the sets $(E_F)_{F\in{\mathcal F}}$ are pairwise disjoints. Of course, the constant $1/2$ can be replaced by any other fixed constant $\delta
\in(0,1)$.

If ${\mathcal F}$ is a subcollection of ${\mathcal D}$ $\sigma$-sparse and  $Q\in {\mathcal D}$, we denote by
$$\pi_{\mathcal F} (Q)= \min \{ F\in {\mathcal F};\, Q\subset F\}$$
 and analogously for $\pi_{\mathcal G}$ where ${\mathcal G}$ is $\mu$-sparse. 

 Then, if $g=(g_Q)_{Q\in{\mathcal D}}$, $g_Q\in\R$, $G\in{\mathcal G}$,   we denote $g_G: =\{ g_Q\}_{\pi_{\mathcal G}(Q)=G}$. The formal adjoint operator of $T$ is denoted by $T^*$  and we define $r$ by $1/r=1/q-1/p$.

\begin{thm}\label{thm:qp}
Let $1<q<p<\infty$, $ q< s \leq \infty$, and let $\mu,\sigma$ be positive locally finite measures on $\R^n$, and $(\lambda_Q)_{Q\in {\mathcal D}}$, $\lambda_Q\geq 0$, $Q\in{\mathcal D}$. We then have:

\begin{equation}\label{eqn:qp}\|T(f)\|_{L^q_{l^s}(\mu)}=\|\left(\sum_{Q\in {\mathcal D}}\lambda_Q^s \left( \int_Q fd\sigma \right)^s \chi_Q\right)^\frac1{s} \|_{L^q(d\mu)}\leq C \|f \|_{L^p(\nu)}\end{equation} if and only if the following two conditions hold:

\begin{enumerate}
\item\label{enum:qp1} There exists $C_1>0$ such that for any ${\mathcal G}$ subcollection of dyadic cubes in $ {\mathcal D}$ $\mu$-sparse, we have 
$$\left\|\left\{  \frac{\| T^*(g_G)\|_{L^{p'}(\sigma)}}{\mu(G)^{\frac1{q'}}\|g_G\|_{L_{l^{s'}}^\infty(\mu)}} \right\}_{G\in{\mathcal G}}\right\|_{l^r({\mathcal G})}\leq C_1.$$

\item\label{enum:qp20} There exists $C_2>0$ such that 
for any ${\mathcal F}$ subcollection of dyadic cubes in $ {\mathcal D}$, $\sigma$-sparse, and any $(\beta_F)_{F\in{\mathcal F}}$, $\beta_F\geq 0$, 
$$\left\| \sum_{F\in{\mathcal F}}\frac{\beta_F}{\sigma(F)^{1/p}}  T(\chi_F) \right\|_{L_{l^s}^q(\mu)}\leq C_2 \left(  \sum_{F\in{\mathcal F}}\beta_F^p\right)^{1/p}.$$
\end{enumerate}
In addition, if $C, C_i$, $i=1,2$ are the smallest constants in \eqref{eqn:qp}, \eqref{enum:qp1} and \eqref{enum:qp20} respectively, we have that $C\approx C_1+C_2$.
\end{thm}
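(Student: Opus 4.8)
\noindent\emph{Strategy.} Since $L^q_{l^s}(\mu)$ is a Banach space, the plan is to reformulate \eqref{eqn:qp}, by testing against nonnegative sequences $g=(g_Q)_{Q\in{\mathcal D}}$, as the bilinear estimate
\begin{equation*}
\langle T(f),g\rangle=\sum_{Q\in{\mathcal D}}\lambda_Q\Big(\int_Q f\,d\sigma\Big)\Big(\int_Q g_Q\,d\mu\Big)\le C\,\|f\|_{L^p(\sigma)}\,\|g\|_{L^{q'}_{l^{s'}}(\mu)}\qquad(f,g\ge0),
\end{equation*}
or equivalently as the dual bound $\|T^*(g)\|_{L^{p'}(\sigma)}\le C\,\|g\|_{L^{q'}_{l^{s'}}(\mu)}$. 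I would then show that \eqref{enum:qp1} and \eqref{enum:qp20} each follow from \eqref{eqn:qp} with constant $\lesssim C$, and, conversely, that together they yield \eqref{eqn:qp} with $C\lesssim C_1+C_2$. Throughout, $\langle f\rangle^\sigma_Q:=\sigma(Q)^{-1}\int_Q f\,d\sigma$, and I would use repeatedly that a $\sigma$-sparse family ${\mathcal F}$ is $\sigma$-Carleson, hence $\sum_{F\in{\mathcal F}}(\langle f\rangle^\sigma_F)^p\sigma(F)\lesssim\|f\|^p_{L^p(\sigma)}$ by the Carleson embedding theorem, and likewise for $\mu$.

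\noindent\emph{Necessity.} For \eqref{enum:qp20}: given $\sigma$-sparse ${\mathcal F}$ and $\beta_F\ge0$, apply \eqref{eqn:qp} to $f=\sum_{F\in{\mathcal F}}\beta_F\sigma(F)^{-1/p}\chi_F$; by linearity of $T$ the left-hand side is exactly the quantity in \eqref{enum:qp20}, while $\|f\|_{L^p(\sigma)}\lesssim(\sum_F\beta_F^p)^{1/p}$ by the Carleson property, so $C_2\lesssim C$. For \eqref{enum:qp1}: fix $\mu$-sparse ${\mathcal G}$; since the quantity in \eqref{enum:qp1} is homogeneous of degree zero in each block $g_G$ separately, I would normalise $\|g_G\|_{L^\infty_{l^{s'}}(\mu)}=1$ for all $G$. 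Choosing, for each $G$, a function $h_G\ge0$ with $\|h_G\|_{L^p(\sigma)}=1$, $\operatorname{supp}h_G\subset G$ and $\langle T^*(g_G),h_G\rangle_\sigma=\|T^*(g_G)\|_{L^{p'}(\sigma)}$, and then pairing in $L^{q'}_{l^{s'}}$--$L^q_{l^s}$ using $\|g_G\|_{L^{q'}_{l^{s'}}(\mu)}\le\mu(G)^{1/q'}$, gives $\|T^*(g_G)\|_{L^{p'}(\sigma)}\,\mu(G)^{-1/q'}\le\|(\lambda_Q(\int_Q h_G\,d\sigma)\chi_Q)_{\pi_{\mathcal G}(Q)=G}\|_{L^q_{l^s}(\mu)}$. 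Summing $r$-th powers over $G$ and running the standard upper-triangle duality argument ($\ell^r$--$\ell^{r'}$ duality together with the $\mu$-sparseness of ${\mathcal G}$, as in the corresponding proofs of \cite{tanaka} and \cite{hanninenhytonenli}) would bound the $l^r$-norm in \eqref{enum:qp1} by $\lesssim C$.

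\noindent\emph{Sufficiency.} Assuming \eqref{enum:qp1} and \eqref{enum:qp20}, fix $f,g\ge0$ and estimate $\langle T(f),g\rangle$. The core step is to build two \emph{parallel} stopping families: a $\sigma$-sparse ${\mathcal F}$ from the usual stopping on the $\sigma$-averages of $f$ (so that $\langle f\rangle^\sigma_Q\lesssim\langle f\rangle^\sigma_F$ whenever $\pi_{\mathcal F}(Q)=F$), and a $\mu$-sparse ${\mathcal G}$ from a stopping adapted to $\mu$ and to the $l^{s'}$-structure of $g$, arranged so that each block satisfies $\|g_G\|_{L^\infty_{l^{s'}}(\mu)}\lesssim a_G$ with $\sum_G a_G^{q'}\mu(G)\lesssim\|g\|^{q'}_{L^{q'}_{l^{s'}}(\mu)}$. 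Each $Q$ in the sum has $Q\subset\pi_{\mathcal F}(Q)\cap\pi_{\mathcal G}(Q)$, so these two cubes are nested, and I would split $\langle T(f),g\rangle=\mathrm{I}+\mathrm{II}$ according to $\pi_{\mathcal F}(Q)\subsetneq\pi_{\mathcal G}(Q)$ or $\pi_{\mathcal G}(Q)\subset\pi_{\mathcal F}(Q)$. In $\mathrm{I}$ the cubes belong to the ${\mathcal G}$-block of $G=\pi_{\mathcal G}(Q)$; replacing $\int_Q f\,d\sigma$ by $\lesssim\langle f\rangle^\sigma_{\pi_{\mathcal F}(Q)}\sigma(Q)$, summing the block so as to produce a pairing with $T^*(g_G)$, applying \eqref{enum:qp1}, and then Hölder in the form $\ell^r\times\ell^{r'}$ together with the two Carleson bounds and $\tfrac1r=\tfrac1q-\tfrac1p$, should give $\mathrm{I}\lesssim C_1\|f\|_{L^p(\sigma)}\|g\|_{L^{q'}_{l^{s'}}(\mu)}$. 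In $\mathrm{II}$ the cubes belong to the ${\mathcal F}$-block of $F=\pi_{\mathcal F}(Q)$; using $f\lesssim\sum_F\langle f\rangle^\sigma_F\chi_{E_F}$ on the supports in play and setting $\beta_F=\langle f\rangle^\sigma_F\sigma(F)^{1/p}$ (whence $\sum_F\beta_F^p\lesssim\|f\|^p_{L^p(\sigma)}$), the inner sum is dominated, after pairing with $g$ by Hölder in $L^q_{l^s}\times L^{q'}_{l^{s'}}$, by the left-hand side of \eqref{enum:qp20}, so $\mathrm{II}\lesssim C_2\|f\|_{L^p(\sigma)}\|g\|_{L^{q'}_{l^{s'}}(\mu)}$. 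Adding the two bounds proves \eqref{eqn:qp} with $C\lesssim C_1+C_2$.

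\noindent\emph{Main obstacle.} Because $s\ne1$, $T$ and $T^*$ are not symmetric, so $\mathrm{I}$ and $\mathrm{II}$ genuinely need two different testing conditions: $\mathrm{I}$ is controlled by a condition on $T^*$ applied to the $g$-\emph{blocks} $g_G$ — which is precisely why ${\mathcal G}$ must be stopped relative to the $l^{s'}$-structure of $g$, so that each block has essentially constant $L^\infty_{l^{s'}}$-size — whereas $\mathrm{II}$ is controlled only by a condition on $T$ applied to an arbitrary $\sigma$-sparse \emph{superposition} $\sum_F\beta_F\chi_F$, since the $l^s$-norm does not linearise and testing on single $\chi_F$'s is not enough. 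The delicate part is the bookkeeping: arranging that the local quantities produced by the parallel stopping decomposition match exactly the normalisations $\mu(G)^{1/q'}$ and $\sigma(F)^{1/p}$ in \eqref{enum:qp1}, \eqref{enum:qp20}, and closing each estimate through the Hölder split dictated by $\tfrac1r=\tfrac1q-\tfrac1p$.
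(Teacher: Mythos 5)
Your overall architecture coincides with the paper's: dualize, build parallel stopping families for $(f,\sigma)$ and $(g,\mu)$, split the bilinear form according to whether $\pi_{\mathcal F}(Q)\subset\pi_{\mathcal G}(Q)$ or the reverse, close part $\mathrm I$ with condition (i) via the triple H\"older split $1/p+1/r+1/q'=1$ together with the stopping property of the blocks $g_G$ and the Carleson embedding, and close part $\mathrm{II}$ with condition (ii) after dualizing the $\ell^{p'}({\mathcal F})$-norm. The sufficiency direction and the necessity of (ii) are in order; for the latter, the equivalence $\|\sum_F\beta_F\sigma(F)^{-1/p}\chi_F\|_{L^p(\sigma)}\simeq(\sum_F\beta_F^p)^{1/p}$ is exactly the splitting lemma for sparse families that the paper records as Lemma~\ref{lem:2.3}.

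The necessity of (i), however, has a genuine gap as you have set it up. After choosing the extremal functions $h_G$ you are left having to prove $\bigl\|\{\|T_G(h_G\sigma)\|_{L^q_{l^s}(\mu)}\}_{G}\bigr\|_{\ell^r({\mathcal G})}\lesssim C$, where $T_G$ restricts to the block $\pi_{\mathcal G}(Q)=G$. The ``standard'' $\ell^{r/q}$-duality reduces this to bounding $\sum_G\|T_G(\delta_Gh_G\sigma)\|^q_{L^q_{l^s}(\mu)}$ over $\sum_G\delta_G^p\le1$, and to apply the hypothesis \eqref{eqn:qp} a single time you would need the disjoint-block superadditivity $\sum_G\|u_G\|^q_{L^q_{l^s}(\mu)}\le\|\sum_Gu_G\|^q_{L^q_{l^s}(\mu)}$. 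Pointwise this reads $\sum_G(|u_G|_{l^s})^q\le(\sum_G|u_G|^s_{l^s})^{q/s}$, which is exactly the wrong direction when $s>q$; and applying \eqref{eqn:qp} separately to each $G$ only yields $C^q\sum_G\delta_G^q$, which is not bounded since $q<p$. This is precisely where the asymmetry between $T$ and $T^*$ bites. The paper avoids the primal side altogether: it dualizes $\bigl(\sum_G\|T^*(\phi_Gg_G)\|^r_{L^{p'}(\sigma)}\bigr)^{1/r}$ with the exponent $r/p'$ (whose conjugate is $q'/p'$), uses the scalar superadditivity $\sum_G|T^*(\beta_G\phi_Gg_G)|^{p'}\le|T^*(\sum_G\beta_G\phi_Gg_G)|^{p'}$ --- valid because $p'>1$ and all terms are nonnegative functions on $\R^n$ --- then applies the dual bound $\|T^*(h)\|_{L^{p'}(\sigma)}\lesssim C\|h\|_{L^{q'}_{l^{s'}}(\mu)}$, and finally disassembles $\|\sum_G\beta_G\phi_G|g_G|_{s'}\|_{L^{q'}(\mu)}$ with Lemma~\ref{lem:2.3}, using that $|g_G|_{s'}$ is constant on each ${\mathcal G}$-child of $G$. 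You should replace your treatment of (i) by this dual-side argument; the rest of your plan then matches the paper's proof.
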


In our second main result, we need that the measure $\mu$ has no point masses.

\begin{thm}\label{thm:reduction}
Assume that $1<q<p$ and $q\leq s\leq \infty$. Let $\mu,\sigma$ be positive locally finite measures. Assume that, in addition, the measure $\mu$ has no point masses. Let $T(f)=(\lambda_Q \left(\int_Q fd\sigma\right)\chi_Q)_{Q\in {\mathcal D}} $. Then
\begin{equation}\label{eqn:qpduality}\| T(f)\|_{L_{l^s}^q(\mu)} \leq C||f||_{L^p(\sigma)},\end{equation}
 if and only if there exists $C_1>0$ such that for any 
$(E_Q)_{Q\in{\mathcal D}}$ pairwise disjoint $\mu$-measurable sets such that $E_Q\subset Q$, 

$$\left\| \sum_{Q\in{\mathcal D}} \frac{(\lambda_Q\sigma(Q))^q\mu(Q)^{1/{\tilde{s}}} \mu(E_Q)^{1/{\tilde s}'}}{\sigma(Q)}\chi_Q \right\|_{L^{{\tilde p}'}(\sigma)}\leq C_1, $$
where ${\tilde s}=s/q$ and ${\tilde p}=p/q$.

\end{thm}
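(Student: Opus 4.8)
The plan is to exploit the reduction already announced in the paper: since $q<p$ and $q\le s\le\infty$, raising the inequality \eqref{eqn:qpduality} to the power $q$ turns it into a two-weight inequality for the \emph{scalar} operator $f\mapsto(\lambda_Q^q(\int_Q fd\sigma)^q\chi_Q)_{Q\in\mathcal D}$ mapping $L^{\tilde p}(\sigma)$ (with $\tilde p=p/q>1$) into the mixed-norm space $L^{1}_{l^{\tilde s}}(\mu)$ with $\tilde s=s/q\ge1$, but with $\int_Q fd\sigma$ replaced by $(\int_Q fd\sigma)^q$; after substituting $f^{1/q}$ for $f$ (legitimate since $f\ge0$) and using the substitution $g=f^{1/q}\in L^p(\sigma)$, the problem becomes: characterize when $\|\sum_Q\lambda_Q^q(\int_Q gd\sigma)^q\chi_Q\|_{L^1_{l^{\tilde s}}(\mu)}\lesssim\|g\|_{L^{\tilde p}(\sigma)}^q$. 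This is an $L^1_{l^{\tilde s}}(\mu)$ estimate, i.e. the case ``$q=1$'' of the mixed-norm problem with new parameters $(\tilde p, 1, \tilde s)$, which is exactly the setting the introduction says the second characterization handles by duality.

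The next step is to dualize the target space. For a fixed sequence of cubes, $\|(a_Q\chi_Q)_Q\|_{L^1_{l^{\tilde s}}(\mu)}=\sup\{\int\sum_Q a_Q h_Q\,d\mu\}$ where the supremum runs over sequences $(h_Q)_Q$ with $h_Q\ge0$ and $\|(h_Q)_Q\|_{L^\infty_{l^{\tilde s}{}'}(\mu)}\le1$. One then has to pair this with the $L^{\tilde p}\to$ duality on the $\sigma$ side, so that the inequality becomes: for all admissible $(h_Q)$ and all $g\ge0$,
\[
\sum_{Q\in\mathcal D}\lambda_Q^q\Big(\int_Q g\,d\sigma\Big)^q\int_Q h_Q\,d\mu\ \le\ C\,\|g\|_{L^{\tilde p}(\sigma)}^q .
\]
Here the key structural observation is that the $l^{\tilde s}{}'$-normalized test sequences $(h_Q)$ can, by a standard flattening/linearization argument, be reduced to ones of the very special form $h_Q=\mu(Q)^{1/\tilde s-1}\chi_{E_Q}/\mu(E_Q)^{1/\tilde s}$ for pairwise disjoint $E_Q\subset Q$ — because the worst test functions concentrate on level sets, and the hypothesis that $\mu$ has \emph{no point masses} is precisely what guarantees that for each $Q$ we can find subsets $E_Q\subset Q$ of any prescribed $\mu$-measure (this is where the no-point-mass assumption enters, and it is the one genuinely delicate point). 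With that reduction, $\int_Q h_Q\,d\mu=\mu(Q)^{1/\tilde s-1}\mu(E_Q)^{1-1/\tilde s}=\mu(Q)^{1/\tilde s'}{}^{\!\!-1}\!\cdot\!$ — more precisely $\mu(Q)^{1/\tilde s}\mu(E_Q)^{1/\tilde s'}/\mu(Q)$, matching the weight appearing in the statement.

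It then remains to recognize the resulting bilinear inequality
\[
\sum_{Q\in\mathcal D}\Big(\int_Q g\,d\sigma\Big)^q\,\frac{(\lambda_Q\sigma(Q))^q\mu(Q)^{1/\tilde s}\mu(E_Q)^{1/\tilde s'}}{\sigma(Q)}\,\frac1{\sigma(Q)^{q-1}}\lesssim\|g\|_{L^{\tilde p}(\sigma)}^q
\]
as a testing statement. Writing $w_Q:=(\lambda_Q\sigma(Q))^q\mu(Q)^{1/\tilde s}\mu(E_Q)^{1/\tilde s'}/\sigma(Q)$, the left side is $\|(\langle g\rangle_{Q,\sigma}\,\text{-type averages})\|$ against the measure $\sum_Q w_Q\chi_Q\,d\sigma$ after absorbing the normalizations; by Hölder with exponent $\tilde p$ on $\sigma$, or more efficiently by the linear duality $L^{\tilde p}(\sigma)$–$L^{\tilde p'}(\sigma)$ applied to $\sum_Q w_Q(\int_Q g\,d\sigma)^{q-1}\chi_Q$ together with the elementary pointwise bound $(\int_Q gd\sigma)^{q-1}\le$ (a dyadic maximal-type quantity), one reduces to the single dual testing inequality $\|\sum_Q w_Q\chi_Q\|_{L^{\tilde p'}(\sigma)}\le C_1$, which is exactly condition in the statement since $\tilde p'=(p/q)'$. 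Conversely, inserting $g=\chi_P$ (or $g$ supported near a single cube) into the bilinear inequality, and choosing $(E_Q)$ to saturate, recovers the dual testing bound, giving the ``only if'' direction.

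The step I expect to be the main obstacle is the flattening/linearization of the $L^\infty_{l^{\tilde s'}}(\mu)$ test sequences down to indicator-type sequences $\mu(Q)^{1/\tilde s-1}\chi_{E_Q}$ with pairwise disjoint $E_Q$: one must show no loss of constant (up to absolute factors) in this reduction, handle the endpoint $s=\infty$ (where $\tilde s=\infty$, $\tilde s'=1$) separately, and it is exactly here that the absence of point masses for $\mu$ is used in an essential way, via a measure-splitting lemma allowing $E_Q$ of arbitrary prescribed mass. The rest is bookkeeping with Hölder's inequality and the standard equivalence between testing over sparse families / pairwise disjoint sets and the stated $l^r$- or $L^{\tilde p'}$-type conditions.
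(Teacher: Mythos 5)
Your overall strategy coincides with the paper's: raise the inequality to the power $q$ to reduce to an $L^1_{l^{\tilde s}}(\mu)$ estimate, dualize the mixed norm, bring in the non-atomicity of $\mu$ to produce the disjoint sets $E_Q$, linearize the remaining $(\int_Q f\,d\sigma)^q$ via the dyadic maximal function (this is exactly Lemma \ref{lemma1}, used with the index $p/q$), and finish with $L^{\tilde p}(\sigma)$--$L^{\tilde p'}(\sigma)$ duality. However, the central mechanism is misidentified, and this creates a genuine gap. You propose to flatten the $L^\infty_{l^{\tilde s'}}(\mu)$-normalized dual sequences $(h_Q)$ down to indicators supported on pairwise disjoint sets $E_Q\subset Q$. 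First, the specific sequence $h_Q=\mu(Q)^{1/\tilde s-1}\chi_{E_Q}/\mu(E_Q)^{1/\tilde s}$ is not admissible: since the $E_Q$ are disjoint, the pointwise $l^{\tilde s'}$ constraint forces $h_Q\leq \chi_{E_Q}$, so a disjointly supported admissible test only yields $\int_Q h_Q\,d\mu\leq\mu(E_Q)$. Testing against such sequences therefore produces the condition with weight $\mu(E_Q)$, which is strictly weaker than the stated weight $\mu(Q)^{1/\tilde s}\mu(E_Q)^{1/\tilde s'}$ (the two differ by the factor $(\mu(Q)/\mu(E_Q))^{1/\tilde s}\geq 1$, precisely what distinguishes the $l^{\tilde s}$ problem from the $l^\infty$ one). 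The necessity direction cannot be run this way.

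What the paper actually does is different in this key step: the extremal dual objects are not disjointly supported functions but general coefficient sequences $(\alpha_Q)$ normalized in a discrete Carleson sense, via the equivalence $A_1(\Lambda)\approx A_2(\Lambda)$ of Lemma \ref{lem:lemma00} (whose nontrivial direction is Verbitsky's embedding theorem; the reverse uses a specific non-flattened choice of $\alpha_Q$). The disjoint sets $E_Q$ then enter only afterwards, through the equivalence between Carleson and sparse \emph{coefficients} (Theorem \ref{thm:dor} and Corollary \ref{cor:lambda_1=lambda_2}) applied to the normalizing quantities $(\alpha_Q/\mu(Q))^{\tilde s'}\mu(Q)$; this is what produces $\mu(Q)^{1/\tilde s}\mu(E_Q)^{1/\tilde s'}$. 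Moreover, that Carleson-to-sparse step is not merely the "measure-splitting lemma" you invoke: the Darboux property of non-atomic measures handles finitely many generations bottom-up, but for the full dyadic lattice one must pass to a limit over truncations, and for the limit to exist the selections at different truncation levels must be nested --- this is why the paper builds the monotone "canonical" sets $H_Q(m)$ of Theorem \ref{lem:canonical} and Remark \ref{rem:canonical} before running the limiting argument. So while your architecture is the right one, both the duality step and the disjoint-set selection require substantive arguments that your sketch either replaces with an incorrect reduction or defers as standard when they are the main technical content of the proof.
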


The paper is organized as follows: 
In Section \ref{section1} we rewrite the estimate \eqref{problem2} as a  problem on discrete multipliers and introduce the definitions and some of the main lemmas required for the proof of our results. In Section \ref{section2} we give the proof of Theorem \ref{thm:qp} and we obtain characterizations for the weak boundedness of the operator $T$. In Section \ref{reduction}
 we give the proof of Theorem \ref{thm:reduction} and in the last Section we give an application of this last result to the boundedness of Wolff-type potentials.

\section{Some preliminaries }\label{section1}

The following lemma  is a consequence of the boundedness of the dyadic maximal function and it shows that (\ref{problem2}) can be rewritten in terms of discrete multipliers (see for example, Lemma 1 in \cite{verbitsky1} or Lemma 2.1 in 
\cite{cascanteortega} for a proof).
\begin{lem}\label{lemma1}
 Assume $1<p<+\infty$. Then estimate (\ref{problem2}) holds if and only if there exists $C>0$ such that for any sequence $(\rho_Q)_Q$ of nonnegative numbers,
 \begin{equation}\label{problem3}
  \left\|\left(\sum_{Q\in {\mathcal D}}\lambda_Q^s(\sigma(Q))^s \rho_Q^s \chi_Q\right)^\frac1{s} \right\|_{L^q(d\mu)} \leq C||\sup_{Q\in{\mathcal D}}(\rho_Q\chi_Q)||_{L^p(\sigma)}.
\end{equation}
  
 \end{lem}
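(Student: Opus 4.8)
\textbf{Proof proposal for Lemma \ref{lemma1}.}

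The plan is to prove the equivalence of \eqref{problem2} and \eqref{problem3} by passing through the dyadic maximal function with respect to $\sigma$. The key observation is that, up to the constant of boundedness of the dyadic maximal operator on $L^p(\sigma)$, the quantities $\int_Q f\, d\sigma$ and $\sigma(Q)\,\rho_Q$ can be interchanged for a suitable choice of the sequence $(\rho_Q)_Q$, while the right-hand sides $\|f\|_{L^p(\sigma)}$ and $\|\sup_Q(\rho_Q\chi_Q)\|_{L^p(\sigma)}$ remain comparable.

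First I would prove that \eqref{problem3} implies \eqref{problem2}. Given a nonnegative $f\in L^p(\sigma)$, set $\rho_Q := \frac{1}{\sigma(Q)}\int_Q f\, d\sigma$ (with $\rho_Q=0$ when $\sigma(Q)=0$). Then $\lambda_Q\sigma(Q)\rho_Q = \lambda_Q\int_Q f\, d\sigma$, so the left-hand side of \eqref{problem3} for this choice of $(\rho_Q)_Q$ is exactly $\|T(f)\|_{L^q_{l^s}(\mu)}$. On the other hand, $\sup_{Q\in{\mathcal D}}(\rho_Q\chi_Q) = M^{\mathcal D}_\sigma f$, the dyadic Hardy--Littlewood maximal function of $f$ with respect to $\sigma$, which satisfies $\|M^{\mathcal D}_\sigma f\|_{L^p(\sigma)}\leq C_p\|f\|_{L^p(\sigma)}$ for $1<p<\infty$ by the standard weak $(1,1)$ bound for $M^{\mathcal D}_\sigma$ together with Marcinkiewicz interpolation (and the trivial $L^\infty$ bound). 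Combining these with \eqref{problem3} gives \eqref{problem2}.

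Conversely, to show \eqref{problem2} implies \eqref{problem3}, fix a nonnegative sequence $(\rho_Q)_Q$ and set $f := \sup_{Q\in{\mathcal D}}(\rho_Q\chi_Q)$; we may assume $f\in L^p(\sigma)$, otherwise there is nothing to prove. For each fixed $Q\in{\mathcal D}$ we have $\rho_Q\chi_Q\leq f\chi_Q$ $\sigma$-a.e. on $Q$, hence $\rho_Q\sigma(Q)\leq\int_Q f\, d\sigma$; since all the $\lambda_Q$ are nonnegative, this yields the pointwise inequality $\sum_Q\lambda_Q^s(\sigma(Q))^s\rho_Q^s\chi_Q\leq\sum_Q\lambda_Q^s(\int_Q f\, d\sigma)^s\chi_Q$, and therefore the left-hand side of \eqref{problem3} is dominated by $\|T(f)\|_{L^q_{l^s}(\mu)}$. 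Applying \eqref{problem2} and noting that $\|f\|_{L^p(\sigma)}=\|\sup_Q(\rho_Q\chi_Q)\|_{L^p(\sigma)}$ completes this direction. (When $s=\infty$ the sums are replaced by suprema throughout, and the same monotonicity argument applies verbatim.)

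The only genuine point requiring care is the maximal-function bound used in the first direction: one must make sure that $\sup_{Q\in{\mathcal D}}(\rho_Q\chi_Q)$ with the specific averages $\rho_Q=\sigma(Q)^{-1}\int_Q f\,d\sigma$ is really the dyadic maximal function $M^{\mathcal D}_\sigma f$, and that the endpoint weak-type estimate for $M^{\mathcal D}_\sigma$ holds for an arbitrary locally finite (not necessarily doubling) measure $\sigma$ --- which it does, since the dyadic structure makes the usual Calder\'on--Zygmund/stopping-time proof of the weak $(1,1)$ inequality measure-independent. Everything else is a direct consequence of the monotonicity of the $l^s$ and $L^q$ norms in the nonnegative coefficients $\lambda_Q$, so no delicate estimate is involved. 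This is exactly the argument given in Lemma 1 of \cite{verbitsky1} and Lemma 2.1 of \cite{cascanteortega}, to which we refer for the details.
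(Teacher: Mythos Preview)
Your argument is correct and is precisely the one the paper has in mind: the paper does not write out a proof but states that the lemma ``is a consequence of the boundedness of the dyadic maximal function'' and refers to Lemma~1 in \cite{verbitsky1} and Lemma~2.1 in \cite{cascanteortega}, which is exactly the route you take (choosing $\rho_Q=\sigma(Q)^{-1}\int_Q f\,d\sigma$ for one direction and $f=\sup_Q\rho_Q\chi_Q$ for the other).
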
 

Before we prove our results, we will point out some observations concerning the cases where $s\leq q$ and $q<p$.   We observe that when $s\leq q$, if we write $t_Q=\rho^s$, and denote $\tilde{p}=p/s$ and $\tilde{q}=q/s$, (\ref{problem3}) can be rewritten as

  \begin{equation*}
  \left\|\sum_{Q\in {\mathcal D}}\lambda_Q^s(\sigma(Q))^s t_Q \chi_Q \right\|_{L^{\tilde{q}}(d\mu)} \leq C||\sup_{Q\in{\mathcal D}}(t_Q\chi_Q)||_{L^{\tilde{p}}(\sigma)}.
\end{equation*}
  
By Lemma \ref{lemma1}, the above estimate is equivalent to the boundedness from $L^{{\tilde p}}(\sigma)$ to $L^{{\tilde q}}(\mu)$ of the linear operator $${\displaystyle {\tilde T}(f)=\sum_{Q\in{\mathcal D}} \lambda_Q^s (\sigma(Q))^s \frac{\int_Qfd\sigma}{\sigma(Q)}\chi_Q},$$
 that is:
 \begin{equation}\label{problem5}\|{\tilde T}(f)\|_{L^{\tilde{q}}(\mu)}\lesssim \|f\|_{L^{\tilde{p}}(\sigma)}.\end{equation}
As we have already said in the introduction, if we assume in addition that $s<q$, that is $\tilde{q}>1$, this estimate was characterized in \cite{tanaka} (see also \cite{cascanteortegaverbitsky3}) and with a different approach in \cite{hanninenhytonenli}.

Next, if $s=q<p$, $\tilde{q}=1$ and consequently,
\begin{align*}
& \|{\tilde T}(f)\|_{L^1(\mu)}=\|\sum_{Q\in{\mathcal D}} \lambda_Q^s (\sigma(Q))^{s-1} \left(\int_Q f d\sigma\right) \chi_Q\|_{L^1(\mu)}=\\
&\sum_{Q\in{\mathcal D}} \lambda_Q^s (\sigma(Q))^{s-1} \int_Q fd\sigma \mu(Q)= \int_{\R^n}\sum_{Q\in{\mathcal D}} \lambda_Q^s  (\sigma(Q))^{s-1}\mu(Q)f\chi_Q d\sigma.
\end{align*}

Hence, duality gives now that \eqref{problem5} holds if and only if
$$
\sum_{Q\in{\mathcal D}} \lambda_Q^s (\sigma(Q))^{s-1} \mu(Q)\chi_Q \in L^{(p/q)'}(\sigma).$$

So we are left to deal with the case $q<p$ and $s> q$. In the rest of the paper we will assume that this is the situation.

In order to prove our  results, we need to introduce some definitions and recall some known facts.
For every ${\mathcal F}$  subcollection of dyadic cubes, we denote

$${\rm ch}_{\mathcal F}(F):=\{ {\rm maximal}\,F'\subset F:\, F'\neq F;\, F'\in {\mathcal F}\}$$ and $\displaystyle{ E_{\mathcal F}(F):= F\setminus \cup_{F'\in {\rm ch}_{\mathcal F}(F)}F'}$.

The sets $E_{\mathcal F}(F)$ are pairwise disjoint. We observe that if $\sigma(E_{\mathcal F}(F))\geq (1/2) \sigma(F)$, then the family ${\mathcal F}$ is $\sigma$-sparse.

It is important to recall that if a family ${\mathcal F}$ is $\sigma$-sparse, then ${\mathcal F}$ is $\sigma$-Carleson, in the sense that for any $P\in{\mathcal F}$, 
\begin{equation} \label{lem:Carleson}\sum_{Q\in{\mathcal F},\, Q\subset P} \sigma(Q)\leq 2\sigma(P).
\end{equation}

Indeed, if ${\mathcal F}$ is $\sigma$-sparse, for each $F\in{\mathcal F}$, there exists $E_F\subset F$ pairwise disjoint, such that $\sigma(E_F)\geq (1/2) \sigma(F)$. Then
$$\sum_{Q\in{\mathcal F},\, Q\subset P}\sigma(Q)\leq 2 \sum_{Q\in{\mathcal F},\, Q\subset P}\sigma(E(Q)) \leq 2\sigma(P).
$$
Nevertheless, the reciprocal is not true, in general, if the measure $\sigma$ has point masses, as we will see in Section \ref{reduction}.

We recall the well known dyadic Carleson embedding:
\begin{lem}\label{lem:Caremb}
Let $1<p<\infty$. If ${\mathcal F}$ is $\sigma$-Carleson, then
$$\left( \sum_{F\in{\mathcal F}} (\langle f\rangle_F^\sigma)^p\sigma(F) \right)^{1/p}\lesssim\|f\|_{L^p(\sigma)},$$
where ${\displaystyle \langle f\rangle_F^\sigma=\frac{\int_F fd\sigma}{\sigma(F)}}$.
\end{lem}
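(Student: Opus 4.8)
The plan is to derive the embedding from the $L^p(\sigma)$-boundedness of the dyadic maximal function
$$M^{\mathcal D}_\sigma f(x):=\sup_{\substack{Q\in{\mathcal D}\\ x\in Q,\ \sigma(Q)>0}}\frac1{\sigma(Q)}\int_Q|f|\,d\sigma,$$
by means of a distribution-function (layer-cake) argument, after two preliminary reductions. First, since $|\langle f\rangle_F^\sigma|\le\langle|f|\rangle_F^\sigma$ it suffices to prove the estimate for $|f|$ in place of $f$, and by monotone convergence applied to the truncations $\min(f,N)\chi_{B(0,N)}$ we may assume $f\ge 0$ and $f\in L^p(\sigma)$. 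Second, I would upgrade the Carleson condition \eqref{lem:Carleson}, which is assumed only for $P\in{\mathcal F}$, to \emph{all} dyadic cubes: given $P\in{\mathcal D}$, let $\{F_i\}$ be the $\subset$-maximal elements of $\{F\in{\mathcal F}:F\subset P\}$; these are pairwise disjoint dyadic cubes contained in $P$, so applying \eqref{lem:Carleson} to each $F_i\in{\mathcal F}$ and summing,
$$\sum_{\substack{F\in{\mathcal F}\\ F\subset P}}\sigma(F)=\sum_i\sum_{\substack{F\in{\mathcal F}\\ F\subset F_i}}\sigma(F)\le 2\sum_i\sigma(F_i)=2\,\sigma\Big(\bigcup_i F_i\Big)\le 2\,\sigma(P).$$

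Next, fix $\lambda>0$ and let $\{Q_j\}_j$ be the $\subset$-maximal elements of the collection $\{F\in{\mathcal F}:\langle f\rangle_F^\sigma>\lambda\}$; these maximal elements exist because, by H\"older's inequality, $\langle f\rangle_Q^\sigma>\lambda$ forces $\sigma(Q)<(\|f\|_{L^p(\sigma)}/\lambda)^p$, so no infinite increasing chain occurs. The $Q_j$ are pairwise disjoint, each is contained in $\Omega_\lambda:=\{x:M^{\mathcal D}_\sigma f(x)>\lambda\}$ (for $x\in Q_j$ one has $M^{\mathcal D}_\sigma f(x)\ge\langle f\rangle_{Q_j}^\sigma>\lambda$), and every $F\in{\mathcal F}$ with $\langle f\rangle_F^\sigma>\lambda$ lies in some $Q_j$. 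Hence, applying the self-improved Carleson bound with $P=Q_j$,
$$\sum_{\substack{F\in{\mathcal F}\\ \langle f\rangle_F^\sigma>\lambda}}\sigma(F)=\sum_j\sum_{\substack{F\in{\mathcal F}\\ F\subset Q_j}}\sigma(F)\le 2\sum_j\sigma(Q_j)\le 2\,\sigma(\Omega_\lambda).$$

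Finally, I would integrate in $\lambda$: writing $(\langle f\rangle_F^\sigma)^p=p\int_0^{\langle f\rangle_F^\sigma}\lambda^{p-1}\,d\lambda$ and using Tonelli's theorem together with the previous bound,
$$\sum_{F\in{\mathcal F}}\big(\langle f\rangle_F^\sigma\big)^p\sigma(F)=p\int_0^\infty\lambda^{p-1}\Big(\sum_{\substack{F\in{\mathcal F}\\ \langle f\rangle_F^\sigma>\lambda}}\sigma(F)\Big)d\lambda\le 2p\int_0^\infty\lambda^{p-1}\sigma(\Omega_\lambda)\,d\lambda=2\,\|M^{\mathcal D}_\sigma f\|_{L^p(\sigma)}^p\lesssim\|f\|_{L^p(\sigma)}^p,$$
where the last inequality is the $L^p(\sigma)$-boundedness of $M^{\mathcal D}_\sigma$, valid for $1<p<\infty$. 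Taking $p$-th roots yields the claim. There is no essential obstacle here, this being a classical fact; the only two points that require a little care are the self-improvement of \eqref{lem:Carleson} from ${\mathcal F}$ to arbitrary dyadic cubes (needed because the cubes $Q_j$ capturing the level set $\{\langle f\rangle^\sigma>\lambda\}$, or the maximal cubes of $\Omega_\lambda$, need not belong to ${\mathcal F}$), and the a priori finiteness afforded by $f\in L^p(\sigma)$, which makes the passage to maximal cubes legitimate.
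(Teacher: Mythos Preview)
The paper does not give a proof of this lemma; it is simply recalled as the ``well known dyadic Carleson embedding'' and then used. Your argument is a correct, standard proof via the layer-cake formula and the $L^p(\sigma)$-boundedness of the dyadic maximal operator $M^{\mathcal D}_\sigma$.

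One small remark on presentation: the self-improvement of \eqref{lem:Carleson} from $P\in{\mathcal F}$ to all $P\in{\mathcal D}$, while true, is not actually used in the argument you wrote. Your maximal cubes $Q_j$ are by construction members of ${\mathcal F}$ (they are maximal in $\{F\in{\mathcal F}:\langle f\rangle_F^\sigma>\lambda\}$), so the original Carleson hypothesis applies directly to $P=Q_j$. You could therefore drop the self-improvement paragraph, or alternatively rewrite the level-set step to use the maximal dyadic cubes of $\Omega_\lambda$ (which need not lie in ${\mathcal F}$), in which case the self-improvement is genuinely needed. Either route is fine; as it stands the proof is correct but carries an unused step.
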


The next lemma, proved in  \cite{hanninenhytonen},  will be used in the proof of Theorem \ref{thm:qp}:
\begin{lem}\label{lem:2.3}
Let $1\leq p<\infty$. let $\sigma$ be a locally finite Borel measure. Let ${\mathcal F}$ be a $\sigma$-sparse collection of dyadic cubes. For each $F\in{\mathcal F}$, assume that $a_F$ is a non-negative function supported on $F$ and constant on each $F'\in {\rm ch}_{\mathcal F}\,(F)$. Then
\begin{align*}
&\left( \sum_{F\in{\mathcal F}} \|a_F\|_{L^p(\sigma)}^p\right)^{1/p}\\&
 \leq \|\sum_{F\in{\mathcal F}}a_F\|_{L^p(\sigma)}\leq 3p \left(\sum_{F\in{\mathcal F}} \|a_F\|_{L^p(\sigma)}^p\right)^{1/p}.
\end{align*}
\end{lem}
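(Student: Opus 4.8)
\emph{Overview of the plan.} I would handle the two inequalities separately. The left one is elementary: since $p\ge1$ and the $a_F$ are non-negative, pointwise $\sum_{F\in{\mathcal F}}a_F^p\le\big(\sum_{F\in{\mathcal F}}a_F\big)^p$, and integrating against $\sigma$ gives it. For the right inequality, a preliminary monotone-convergence step (writing ${\mathcal F}$ as an increasing union of finite subfamilies, each of which still satisfies the hypotheses — if $F'$ is a child of $F$ in a subfamily ${\mathcal F}'$ then $F'\subseteq F''$ for some $F''\in{\rm ch}_{\mathcal F}(F)$, on which $a_F$ is constant, hence $a_F$ is constant on $F'$) lets me assume ${\mathcal F}$ finite.

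\emph{Reduction to a Hardy-type inequality.} The key structural remark is that, $a_F$ being constant on every cube of ${\rm ch}_{\mathcal F}(F)$, the whole sum $\sum_{G\in{\mathcal F}}a_G$ coincides, on each stopping set $E_{\mathcal F}(F)$, with $a_F$ plus a constant. To make this precise I would set, for $G\in{\mathcal F}$, $w_G:=$ the constant value of $a_{\widehat G}$ on $G$ (with $\widehat G$ the ${\mathcal F}$-parent of $G$, and $w_G:=0$ if $G$ is ${\mathcal F}$-maximal), and $W_F:=\sum_{G\in{\mathcal F},\,G\supseteq F}w_G$. Running over the ${\mathcal F}$-ancestors of $F$ one checks that on $E_{\mathcal F}(F)$ one has $\sum_{G\in{\mathcal F}}a_G=a_F+W_F$; since for finite ${\mathcal F}$ the sets $E_{\mathcal F}(F)$ partition $\bigcup_{F\in{\mathcal F}}F$, this gives the identity $\sum_{G\in{\mathcal F}}a_G=h_1+h_2$ with $h_1:=\sum_{F\in{\mathcal F}}a_F\chi_{E_{\mathcal F}(F)}$ and $h_2:=\sum_{F\in{\mathcal F}}W_F\chi_{E_{\mathcal F}(F)}$. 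By the triangle inequality it then suffices to bound $\|h_1\|_{L^p(\sigma)}$ and $\|h_2\|_{L^p(\sigma)}$. Disjointness of the $E_{\mathcal F}(F)$ gives immediately $\|h_1\|_{L^p(\sigma)}^p=\sum_F\int_{E_{\mathcal F}(F)}a_F^p\,d\sigma\le\sum_F\|a_F\|_{L^p(\sigma)}^p$, and $\|h_2\|_{L^p(\sigma)}^p=\sum_F W_F^p\,\sigma(E_{\mathcal F}(F))\le\sum_F W_F^p\,\sigma(F)$; moreover, reindexing by ${\mathcal F}$-parents, $\sum_{F\in{\mathcal F}}w_F^p\,\sigma(F)=\sum_{H\in{\mathcal F}}\sum_{G\in{\rm ch}_{\mathcal F}(H)}w_G^p\,\sigma(G)=\sum_{H\in{\mathcal F}}\int_{H\setminus E_{\mathcal F}(H)}a_H^p\,d\sigma\le\sum_{H\in{\mathcal F}}\|a_H\|_{L^p(\sigma)}^p$. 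Thus the whole matter reduces to the tree Hardy-type inequality
\begin{equation*}
\sum_{F\in{\mathcal F}}W_F^p\,\sigma(F)\le(2p)^p\sum_{F\in{\mathcal F}}w_F^p\,\sigma(F),\qquad W_F=\sum_{G\in{\mathcal F},\,G\supseteq F}w_G ,
\end{equation*}
which is the only place where the sparseness of ${\mathcal F}$ is used — and only through the Carleson packing bound \eqref{lem:Carleson}.

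\emph{The Hardy inequality, and where the difficulty lies.} This I would prove by a layer-cake computation. Write $X:=\sum_{F\in{\mathcal F}}W_F^p\,\sigma(F)=p\int_0^\infty t^{p-1}\Big(\sum_{F\in{\mathcal F}:\,W_F>t}\sigma(F)\Big)\,dt$. Since $F\mapsto W_F$ increases along decreasing chains, for each $t$ the set $\{F\in{\mathcal F}:W_F>t\}$ splits as the disjoint union, over the maximal cubes $M$ with $W_{\widehat M}\le t<W_M$, of the subtrees $\{F\in{\mathcal F}:F\subseteq M\}$; by \eqref{lem:Carleson} each of these has $\sigma$-mass at most $2\sigma(M)$, so $\sum_{F:\,W_F>t}\sigma(F)\le2\sum_M\sigma(M)\chi_{[W_{\widehat M},\,W_M)}(t)$. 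Integrating in $t$, and using $W_M-W_{\widehat M}=w_M$ and the convexity bound $W_M^p-W_{\widehat M}^p\le p\,w_M\,W_M^{p-1}$, I get $X\le2p\sum_M w_M W_M^{p-1}\sigma(M)$; Hölder with exponents $p$ and $p'$ (note $(p-1)p'=p$) then gives $X\le2p\,\big(\sum_{F\in{\mathcal F}}w_F^p\sigma(F)\big)^{1/p}X^{1/p'}$, hence $X^{1/p}\le2p\,\big(\sum_{F\in{\mathcal F}}w_F^p\sigma(F)\big)^{1/p}$. Putting the pieces together, $\|h_2\|_{L^p(\sigma)}\le 2p\,(\sum_F\|a_F\|_{L^p(\sigma)}^p)^{1/p}$, and therefore $\|\sum_{G\in{\mathcal F}}a_G\|_{L^p(\sigma)}\le(1+2p)\,(\sum_F\|a_F\|_{L^p(\sigma)}^p)^{1/p}\le 3p\,(\sum_F\|a_F\|_{L^p(\sigma)}^p)^{1/p}$, using $p\ge1$. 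The main obstacle is precisely this Hardy estimate — quantifying how the Carleson packing damps the accumulation of the $W_F$'s — while the structural identity and the manipulations with the $E_{\mathcal F}(F)$'s are routine once it is in place.
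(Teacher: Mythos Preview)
Your proof is correct. The paper itself does not prove this lemma: it merely quotes the statement and attributes it to \cite{hanninenhytonen}, so there is no in-paper argument to compare against. Your route---decomposing $\sum_{F}a_F$ on the stopping sets $E_{\mathcal F}(F)$ into the ``local'' part $a_F$ and the accumulated ancestral constants $W_F$, and then handling the latter by a tree Hardy inequality proved via layer-cake plus the Carleson packing \eqref{lem:Carleson}---is a clean, self-contained substitute for the citation, and recovers the stated constant $3p$ exactly (your $1+2p\le 3p$ for $p\ge1$).
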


\section{Characterizations based on parallel stopping  cubes}\label{section2}

\subsection{Proof of Theoren \ref{thm:qp}}
\begin{proof}

We begin with the proof of the necessity of conditions \eqref{enum:qp1} and \eqref{enum:qp20} in Theorem \ref{thm:qp}. Assume that \eqref{eqn:qp} holds. By duality, this is equivalent to the norm inequality
\begin{equation*}
\|T^* (g)\|_{L^{p'}(\sigma)} \lesssim \|g\|_{L_{l^{s'}}^{q'}(\mu)}.
\end{equation*}

 Let us start with \eqref{enum:qp1}. We will follow some of the arguments in Proposition 3.1 in \cite{hanninenhytonenli}. If $g=(g_Q)_{Q\in{\mathcal D}}$ and ${\mathcal G}\subset {\mathcal D}$ is $\mu$-sparse, we define for every $G\in{\mathcal G}$, $\phi_G=1/(\mu(G)^{\frac1{q'}}\|g_G\|_{L_{l^{s'}}^\infty(\mu)})$. We want to show that
$$\left\{  \| T^*(\phi_G g_G)\|_{L^{p'}(\sigma)}\right\}_{G\in{\mathcal G}}$$  is in $l^r({\mathcal G})$. Applying duality with $(r/p')'=q'/p'>1$, we have
\begin{equation}\begin{split}\label{align:duality0}
&\left( \sum_{G\in{\mathcal G}}  \| T^*(\phi_G g_G)\|_{L^{p'}(\sigma)}^{r}\right)^{1/r}=\left( \sum_{G\in{\mathcal G}} ( \| T^*(\phi_G g_G)\|_{L^{p'}(\sigma)}^{p'})^{r/p'}\right)^{1/r}=\\
&=\sup \left\{ \left( \sum_{G\in{\mathcal G}}\alpha_G  \| T^*(\phi_G g_G)\|_{L^{p'}(\sigma)}^{p'}\right)^{1/p'};\,  \sum_{G\in{\mathcal G}}\alpha_G^{q'/p'}\leq 1 \right\}\\
&=\sup \left\{ \left( \sum_{G\in{\mathcal G}}  \| T^*(\beta_G\phi_G g_G)\|_{L^{p'}(\sigma)}^{p'}\right)^{1/p'};\,  \sum_{G\in{\mathcal G}}\beta_G^{q'}\leq 1 \right\} .
\end{split}\end{equation}
But since $p'>1$, the hypothesis \eqref{eqn:qp}, together with the triangle inequality for $l^{s'}$ and \eqref{align:duality0}, gives that
\begin{align*}
& \sum_{G\in{\mathcal G}}\| T^*(\beta_G\phi_G g_G)\|_{L^{p'}(\sigma)}^{p'}= \int_{\R^n}  \sum_{G\in{\mathcal G}}|T^* (\beta_G \phi_G g_G)|^{p'}d\sigma\\
&\leq \int_{\R^n}  \left|\sum_{G\in{\mathcal G}}T^* (\beta_G \phi_G g_G)\right|^{p'}d\sigma=\| T^* (\sum_{G\in{\mathcal G}}(\beta_G \phi_G g_G))\|_{L^{p'}(\sigma)}^{p'}\\
&\lesssim \| \sum_{G\in{\mathcal G}}(\beta_G \phi_G g_G)\|_{L_{l^{s'}}^{q'}(\mu)}^{p'}
\leq \| \sum_{G\in{\mathcal G}}(\beta_G \phi_G |g_G|_{{s'}})\|_{L^{q'}(\mu)}^{p'}.
\end{align*}
Here $|g_G|_{s'}=\left(\sum_{\pi_{\mathcal G}(Q)=G} g_Q^{s'}\chi_Q\right)^{1/s'}$.

Next,  observe that if $G\in{\mathcal G}$, and $\pi_{\mathcal G}(Q)=G$, then for any $G'\in {\rm ch}(G)$, we have that either $Q$ and $G'$ are pairwise disjoints or $G' \subset Q $ and $\beta_G \Phi_G|g_G|_{{s'}}$ is constant on $G'$. Hence, applying Lemma \ref{lem:2.3}, \eqref{align:duality0} is bounded by
\begin{align*}
&\left( \sum_{G\in{\mathcal G}} \|\beta_G \phi_G |g_G|_{s'}\|_{L^{q'}(\mu)}^{q'}\right)^{1/q'}= 
\left( \sum_{G\in{\mathcal G}} \beta_G^{q'} \phi_G^{q'} \int_{\R^n} |g_G|_{s'} ^{q'}d\mu\right)^{1/q'}\\
&\leq \left( \sum_{G\in{\mathcal G}} \beta_G^{q'} \phi_G^{q'} \|g_G\|_{L_{l^{s'}}^\infty(\mu)}^{q'}\mu(G)\right)^{1/q'}\lesssim 1.
\end{align*}

Now we check  \eqref{enum:qp20}. Let ${\mathcal F}$ be $\sigma$-sparse, and $(\beta_F)_{F\in{\mathcal F}}\in l^p({\mathcal F})$, $\beta_F\geq0$. Applying \eqref{eqn:qp} to the function ${\displaystyle \sum_{F\in{\mathcal F}} \frac{\beta_F}{\sigma(F)^{1/p}} \chi_F}$, together with Lemma \ref{lem:2.3}, we obtain:
\begin{align*}
&\|\sum_{F\in{\mathcal F}}  \frac{\beta_F}{\sigma(F)^{1/p}}T( \chi_F)\|_{L_{l^s}^q(\mu)}=\|T\left(\sum_{F\in{\mathcal F}}  \frac{\beta_F}{\sigma(F)^{1/p}}  \chi_F\right)\|_{L_{l^s}^q(\mu)}\\
&\lesssim \|\sum_{F\in{\mathcal F}} \frac{\beta_F}{\sigma(F)^{1/p}}  \chi_F\|_{L^p(\sigma)} \simeq \left( \sum_{F\in{\mathcal F}} \|\frac{\beta_F}{\sigma(F)^{1/p}}\chi_F\|_{L^p(\sigma)}^p\right)^{1/p}\\
&\simeq \left( \sum_{F\in {\mathcal F}} \beta_F^p\right)^{1/p}.
\end{align*}
Then \eqref{enum:qp20} holds.

Assume now that \eqref{enum:qp1} and \eqref{enum:qp20} hold. By duality, we want to show that
\begin{equation*}
\sum_{Q\in{\mathcal D}} \lambda_Q \int_Q fd\sigma \int_Q g_Qd\mu \lesssim \|f\|_{L^p(\sigma)}\|g\|_{L_{l^{s'}}^{q'}(\mu)}.
\end{equation*}

 As it is remarked in \cite{hanninen} and \cite{hanninenhytonenli}, by an application of Stein's inequality,  we may assume that  $g=(g_Q\chi_Q)_{Q\in {\mathcal D}}$, where $g_Q\in R$ and $f$ and $g_Q$ are nonnegative. We also may assume that the collection ${\mathcal D}$ is finite and that for some $Q_0\in{\mathcal D}$, we have that for all $Q\in{\mathcal D}$, $Q\subset Q_0$.
We will also use the stopping cubes defined for each of the pairs $(f,\sigma)$ and $(g,\mu)$, defined in \cite{hanninen}, considering the same subcollections ${\mathcal F}$, $\sigma$-sparse and ${\mathcal G}$, $\mu$-sparse .

For $F\in{\mathcal F}$, $G\in{\mathcal G}$, we write $\pi(Q)=(F,G)$ if $\pi_{\mathcal F}(Q)=F$ and $\pi_{\mathcal G}(Q)=G$.

We then have
$$
 \sum_{Q\in{\mathcal D}} \lambda_Q  \int_Qfd\sigma \int_Q g_Q d\mu\leq
I+J$$
where
\begin{equation}\label{eqn:split1}
I=\sum_{G\in{\mathcal G}} \sum_{\substack{F\in{\mathcal F};\\ F\subset G}} \sum_{\substack{Q\in{\mathcal D}\\ \pi(Q)=(F,G)}}\lambda_Q \int_Q g_Q d\mu \int_Q fd\sigma,
\end{equation}
and
\begin{equation}\label{eqn:split2}
II=\sum_{F\in{\mathcal F}} \sum_{\substack{G\in{\mathcal G};\\ G\subset F}} \sum_{\substack{Q\in{\mathcal D}\\ \pi(Q)=(F,G)}}\lambda_Q \int_Q g_Q d\mu \int_Q fd\sigma.
\end{equation}

We will estimate both sums \eqref{eqn:split1} and \eqref{eqn:split2} separately.
For the estimate $I$, we proceed as in \cite{hanninen} where the function $f$ was replaced by  functions $f_G$ such that 
\begin{equation}\label{eqn:split4}
\left( \sum_{G\in{\mathcal G}} \|f_G\|_{L^p(\sigma)}^p \right)^{1/p}\lesssim \|f\|_{L^p(\sigma)} .
\end{equation}
and ${\mathcal G}$ is such that for $G\in{\mathcal G}$, $\|g_G\|_{L_{l^{s'}}^\infty(\mu)} \leq 2 \langle |g|_{s'}\rangle_G^\mu$.

Then by H\"older's inequality,

$$I
\leq \sum_{G\in{\mathcal G}}  \|f_G\|_{L^p(\sigma)} \|T^*(g_G\mu)\|_{L^{p'}(\sigma)}$$
Since $1/p+1/r+1/q'=1$, applying again H\"older's inequality, we have that the above is bounded by
$$
\left( \sum_{G\in{\mathcal G}}  \|f_G\|_{L^p(\sigma)}^p\right)^{1/p} \left( 
 \sum_{G\in{\mathcal G}} \left(
 \frac{\|T^*(g_G\mu)\|_{L^{p'}(\sigma)}}{\mu(G)^{1/q'} 
\|g_G\|_{L_{l^{s'}}^\infty(\mu)} }
\right)^r
\right)^{1/r} \left( \sum_{G\in{\mathcal G}}\mu(G) \left( \|g_G\|_{L_{l^{s'}}^\infty(\mu)}\right)^{q'}  \right)^{1/q'} 
$$
Next, by \eqref{eqn:split4}, the above is bounded by
\begin{align*}& 
\|f\|_{L^p(\sigma)}\left( 
 \sum_{G\in{\mathcal G}} \left(
 \frac{\|T^*(g_G\mu)\|_{L^{p'}(\sigma)}}{\mu(G)^{1/q'} 
\|g_G\|_{{l^{s'}}^\infty{(\mu)}}}
\right)^r
\right)^{1/r} \left( \sum_{G\in{\mathcal G}}\mu(G) \left( \langle |g|_{s'}\rangle_G^\mu\right)^{q'}  \right)^{1/q'}. 
\end{align*}
Applying the hypothesis \eqref{enum:qp1} of Theorem \ref{thm:qp} and Lemma \ref{lem:Caremb} (recall that by \eqref{lem:Carleson}, ${\mathcal G}$  satisfies a $\mu$-Carleson condition), we finally obtain that $I\lesssim \|f\|_{L^p(\sigma)}\|g\|_{L_{l^{s'}}^{q'}(\mu)}$.

Now we estimate $II$. If we denote $g_F:=(g_Q)_{\substack{Q\in{\mathcal D};\\ \pi(Q)=(F,G) }}$ for some $G\in {\mathcal G}$ such that $G\subset F$, we argue as in \cite{hanninen} to obtain that
\begin{align*}&II\leq 
2 \sum_{F\in{\mathcal F}}\langle f\rangle_F^\sigma \sum_{\substack{G\in{\mathcal G}
;\\ G\subset F}} \sum_{\substack{Q\in{\mathcal D}\\ \pi(Q)=(F,G)}}\lambda_Q \sigma(Q) \int_Qg_Qd\mu\\&
=2 \sum_{F\in{\mathcal F}}\langle f\rangle_F^\sigma\int_{\R^n} \sum_{Q\in{\mathcal D}} (T_F(\sigma))_Q (g_F)_Q d\mu\\&
\lesssim \left\{ \sum_{F\in{\mathcal F}} \left( \langle f\rangle_F^\sigma\right)^p\sigma(F)\right)^{1/p}  \left\{ \sum_{F\in{\mathcal F}}\left( \int_{\R^n} \sum_{Q\in{\mathcal D}} \frac1{\sigma(F)^{1/p}} (T_F(\sigma))_Q (g_F)_Q d\mu\right)^{p'} \right\}^{1/p'}\\& \lesssim \|f\|_{L^p(\sigma)}  \left\{ \sum_{F\in{\mathcal F}}\left( \int_{\R^n} \sum_{Q\in{\mathcal D}} \frac1{\sigma(F)^{1/p}} (T_F(\sigma))_Q (g_F)_Q d\mu\right)^{p'} \right\}^{1/p'},
\end{align*}
and where in the last estimate we have used the $\sigma$-Carleson condition for the family ${\mathcal F}$.

Now, duality, H\"older's inequality and the hypothesis \eqref{enum:qp20} of Theorem \ref{thm:qp}, give
\begin{align*}& \left\| \left( \int_{\R^n} \sum_{Q\in{\mathcal D}} \frac1{\sigma(F)^{1/p'}} (T_F(\sigma))_Q (g_F)_Qd\mu\right)_{F\in{\mathcal F}}\right\|_{l^{p'}({\mathcal F})}\\& =
\sup_{\sum_{F\in{\mathcal F}}\beta_F^p\leq 1} \sum_{F\in{\mathcal F}}\int_{\R^n} \sum_{Q\in{\mathcal D}} \frac{\beta_F}{\sigma(F)^{1/p}} (T_F(\sigma))_Q (g_F)_Q d\mu \\&\lesssim 
\sup_{\sum_{F\in{\mathcal F}}\beta_F^p\leq 1} \left\| \sum_{F\in{\mathcal F}}\frac{\beta_F}{\sigma(F)^{1/p}}
 T_F(\sigma) \right\|_{L_{l^s}^q(\mu)}\|g\|_{L_{l^{s'}}^{q'}(\mu)} \lesssim \|g\|_{L_{l^{s'}}^{q'}(\mu)}.
\end{align*}
Of course the equivalence $C\simeq C_1+C_2$ follows from the last estimates.
\end{proof}
Observe that here we have used that $T_F(\sigma)\leq T(\chi_F)$. In fact, in condition \eqref{enum:qp20} of Theorem \ref{thm:qp}, we can substitute $T(\chi_F)$ by $T_F(\sigma)$.

\subsection{On weak estimates}

One natural question that arises  is the study of the weak boundedness of the operator 
\begin{equation}\label{eqn:weak0}T:L^p(\sigma)\to L_{l^s}^{q,\infty}(\mu).\end{equation} 

Here the space $L_{l^s}^{q,\infty}(\mu)$ consists of sequences of functions $f=(f_Q)_{Q\in{\mathcal D}}$ for which the function $\left(\sum_{Q\in {\mathcal D}} |f_Q|^s\right)^{1/s}\in L^{q,\infty }(\mu)$. We recall that  $g\in L^{q,\infty}(\mu)$ 
 if and only if, $$\|g\|_{L^{q,\infty}(\mu)}=\sup_{\lambda>0} \lambda^q \mu(\{x\in{\R}^n;\, |g(x)|>\lambda\})<+\infty.$$ 
The so called Kolmogorov's condition (see Lemma 2.8, Chapter V in \cite{kolmogorov}) gives an equivalent definition. Namely, $g\in L^{q,\infty}(\mu)$ if and only if, there exists $\alpha<q$ and $C>0$ such that for any measurable subset $E\subset {\R}^n$, such that $\mu(E)>0$,
\begin{equation*}
 \mu(E)^{\frac{\alpha-q}{\alpha q}}\left\{\int_E   |g(x)|^\alpha d\mu(x)\right\}^\frac1{\alpha}\leq C.\end{equation*}

We then have that $T:L^p(\sigma)\to L_{l^s}^{q,\infty}(\mu)$ if and only if, for some $\alpha$, $1<\alpha <q$ and any $E\subset \R^n$, with $\mu(E)>0$, 
\begin{equation}\label{eqn:weak1}
\| T(f)\|_{L_{l^s}^\alpha (\mu_E)} \lesssim  \mu(E)^{\frac{q-\alpha}{q\alpha}}\|f\|_{L^p(\sigma)}.
\end{equation}
By duality, \eqref{eqn:weak1} is equivalent to
\begin{equation*}
\| T^*(g)\|_{L^{p'}(\sigma)} \lesssim \mu(E)^{\frac{q-\alpha}{q\alpha}}\|g\|_{L_{l^{s'}}^{\alpha'} (\mu_E)}.
\end{equation*}

With these observations, Theorem \ref{thm:qp} gives that the following result holds:

\begin{thm}
Let $1<q<p$ . Then \eqref{eqn:weak0} holds if and only if for a fixed $1<\alpha<q$ and for any measurable $E\subset \R^n$ such that $\mu(E)>0$ the following two conditions hold:
\begin{enumerate}
\item\label{enum:weakqp1} There exists $C_1>0$, such that for any $g=(g_Q)_{Q\in{\mathcal D}}$, $g_Q\in\R$, $Q\in\R$,  and any ${\mathcal G}\subset {\mathcal D}$ $\mu_{|E}$-sparse, if $T^*_{|E}$ is the adjoint operator of $T_{|E}$ , then
$$\left\|\left\{  \frac{\| T^*_{|E}(g_G)\|_{L^{p'}(\sigma)}}{\mu_{|E}(G)^{\frac1{q'}}\|g_G\|_{L_{s'}^\infty(\mu_{|E})}} \right\}_{G\in{\mathcal G}}\right\|_{l^{r_\alpha}({\mathcal G})}\lesssim C_1\mu(E)^{\frac{q-\alpha}{q\alpha}},$$ where $1/r_\alpha=1/\alpha-1/p$.

\item\label{enum:weakqp2} There exists $C_2>0$, such that for any ${\mathcal F}$ $\sigma$-sparse, and any $(\beta_F)_{F\in{\mathcal F}}$, $\beta_F\geq 0$, 
$$ 
\left\| \sum_{F\in{\mathcal F}}\frac{\beta_F}{\sigma(F)^{1/p}}  T(\chi_F) \right\|_{L_{l^\alpha}^q(\mu_{|E})}
\leq C_2 \mu(E)^{\frac{q-\alpha}{q\alpha}}\left(  \sum_{F\in{\mathcal F}}\beta_F^p\right)^{1/p}.$$
\end{enumerate}

In addition, if $C, C_i$, $i=1,2$ are the smallest constants in \eqref{eqn:weak0}, \eqref{enum:weakqp1} and \eqref{enum:weakqp2} respectively, we have that $C\approx C_1+C_2$.
\end{thm}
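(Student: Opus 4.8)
The plan is to derive this weak-type characterization directly from the strong-type Theorem~\ref{thm:qp} via the Kolmogorov reformulation already set up in the preceding discussion. The first step is to observe, as recorded above, that $T\colon L^p(\sigma)\to L_{l^s}^{q,\infty}(\mu)$ holds if and only if for one (equivalently, every) fixed $\alpha$ with $1<\alpha<q$ one has the family of strong-type estimates
\begin{equation*}
\|T(f)\|_{L_{l^s}^\alpha(\mu_{|E})}\lesssim \mu(E)^{\frac{q-\alpha}{q\alpha}}\|f\|_{L^p(\sigma)}
\end{equation*}
uniformly over all measurable $E\subset\R^n$ with $\mu(E)>0$; this is exactly Kolmogorov's condition together with the elementary fact that the mixed $l^s$-norm commutes with restriction of the underlying measure. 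The point is that for each fixed $E$ this is an instance of the inequality \eqref{eqn:qp} in Theorem~\ref{thm:qp}, with $\mu$ replaced by $\mu_{|E}$, with exponent $q$ replaced by $\alpha$, and with the scalar constant $C$ replaced by $C\,\mu(E)^{(q-\alpha)/(q\alpha)}$. Note that the hypotheses of Theorem~\ref{thm:qp} are met: $1<\alpha<p$ (since $\alpha<q<p$), $\alpha<s$ (since $\alpha<q\le s$), and $\mu_{|E}$ is again positive and locally finite.

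The second step is simply to transcribe Theorem~\ref{thm:qp} for this choice of data. Theorem~\ref{thm:qp} says the displayed estimate for fixed $E$ is equivalent, with comparable constants, to the two conditions \eqref{enum:qp1} and \eqref{enum:qp20} written for $\mu_{|E}$, for the exponent $\alpha$, and for the adjoint operator $T^*_{|E}$ of $T_{|E}$; here the relevant Lebesgue exponent in the $l^r$-summation becomes $r_\alpha$ with $1/r_\alpha=1/\alpha-1/p$, and the right-hand constants each pick up the factor $\mu(E)^{(q-\alpha)/(q\alpha)}$. Writing these two conditions out verbatim produces precisely \eqref{enum:weakqp1} and \eqref{enum:weakqp2}. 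In \eqref{enum:weakqp2} one uses, as remarked right after the proof of Theorem~\ref{thm:qp}, that $T_F(\sigma)$ may be replaced by $T(\chi_F)$ (or vice versa) without affecting the condition; in \eqref{enum:weakqp1} the reduction to nonnegative $f$ and $g_Q$ via Stein's inequality is the same as in the proof of Theorem~\ref{thm:qp}, so no loss is incurred in stating the test condition for general real $g_Q$.

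The final step is to take the supremum over $E$. By the first step, $T$ is weakly bounded with constant $C$ iff the strong estimates above hold with a uniform constant; by the second step, for each $E$ the best constant in those strong estimates is comparable to the sum of the two best constants appearing in \eqref{enum:weakqp1} and \eqref{enum:weakqp2} for that $E$. Taking $\sup_E$ and using that the comparison constants in Theorem~\ref{thm:qp} are absolute (independent of the measures) yields $C\approx C_1+C_2$ with $C_1,C_2$ the suprema over $E$ of the respective best constants, which is the asserted equivalence. I do not expect a serious obstacle here; the one point that needs a line of care is the independence of $\alpha$: Kolmogorov's lemma guarantees that if the weak bound holds then the strong estimate holds for \emph{every} $\alpha<q$, and conversely a single $\alpha<q$ suffices, so the characterization may be legitimately phrased "for a fixed $1<\alpha<q$." A secondary routine check is that the adjoint of $T_{|E}$ is computed against $d\mu_{|E}$, so that $T^*_{|E}(g_G)$ in \eqref{enum:weakqp1} is the operator $\sum \lambda_Q(\int_Q g_Q\,d\mu_{|E})\chi_Q$ restricted to the relevant cubes, matching the notation $g_G=\{g_Q\}_{\pi_{\mathcal G}(Q)=G}$ with $\mathcal G$ taken $\mu_{|E}$-sparse.
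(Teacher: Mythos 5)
Your proposal is correct and follows essentially the same route as the paper: the authors likewise reduce the weak estimate via Kolmogorov's condition to the family of strong estimates $\|T(f)\|_{L_{l^s}^\alpha(\mu_{|E})}\lesssim \mu(E)^{(q-\alpha)/(q\alpha)}\|f\|_{L^p(\sigma)}$ and then invoke Theorem~\ref{thm:qp} with $\mu_{|E}$ in place of $\mu$ and $\alpha$ in place of $q$, taking the supremum over $E$ at the end.
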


\section{Characterizations based on reduction to $q=1$}\label{reduction}

A key argument used in this section is the equivalence between "sparse coefficients" and "Carleson coefficients" associated to the dyadic system ${\mathcal D}$ and the measure $\mu$ (see Theorem \ref{thm:dor}). An essential condition for this equivalence is that the measure $\mu$ has no point masses. The first theorem will give, for a measure $\mu$ with no point masses, a "canonical" way to choose sets with a prescribed mass. Of course, this choice is not unique, but it will be convenient to have a canonical choice in order to prove Theorem \ref{thm:dor}.

 The proof of the following result will based on an induction process. In this process we will consider cubes $Q$ that contain some "faces". In the first step we fix an order on the family  of the "faces" of $Q$, that is the $n-1$-dimensional cubes that are in $\partial Q$, that we may assume pairwise disjoint, subtracting some of the "edges". This procedure to order the corresponding family of lower dimensional "faces" will be continued in an analogous way in the successive generations.

\begin{thm}\label{lem:canonical}
Let $\mu$ be a positive locally finite measure on $\R^n$ with no point masses. Let $Q$ an  $n$-dimensional cube in $\R^n$ which contains some of its "faces" and $0<m\leq \mu(Q)$. Then there exists an set $H_Q=H_Q(m)\subset Q$ such that $\mu(H_Q)=m$. This set can be chosen  "canonically" in such a way that if $0<m_1\leq m_2\leq \mu(Q)$, the corresponding "canonical" sets $H_Q(m_1)$ and $H_Q(m_2)$ satisfy that $H_Q(m_1)\subset H_Q(m_2)$.
\end{thm}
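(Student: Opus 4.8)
The plan is to build $H_Q(m)$ by a transfinite/recursive exhaustion that respects a fixed ordering of the dyadic-type subdivision of $Q$, so that the monotonicity in $m$ comes for free from the construction. First I would fix, once and for all, the combinatorial data: a linear order on the $2^n$ dyadic children of $Q$, and, in each successive generation, a linear order on the children of every cube produced so far, where by ``cube'' here we must also keep track of which faces belong to it (this is why the statement insists $Q$ ``contains some of its faces''; the faces are distributed among the children according to the pre-fixed ordering of the faces of $Q$, so the decomposition of $Q$ into its dyadic descendants is a genuine partition into Borel sets, with no double-counting of boundary points). Since $\mu$ has no point masses, each of these descendant cubes $R$ has $\mu(R)\to 0$ as the generation $\to\infty$ (more precisely, for each fixed generation the masses are finite and the cubes shrink, and by countable additivity and absence of atoms the mass of a decreasing chain of dyadic cubes tends to the mass of the limit point, which is $0$).

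The core construction: given $0<m\le\mu(Q)$, define $H_Q(m)$ greedily. Enumerate the dyadic children $Q_1,\dots,Q_{2^n}$ in the fixed order. Let $k$ be the largest index with $\mu(Q_1)+\dots+\mu(Q_k)\le m$ (possibly $k=0$). Put $Q_1,\dots,Q_k$ entirely into $H_Q(m)$, leave $Q_{k+2},\dots,Q_{2^n}$ entirely out, and recurse inside $Q_{k+1}$ with the residual mass $m':=m-\big(\mu(Q_1)+\dots+\mu(Q_k)\big)\in[0,\mu(Q_{k+1}))$ — using the fixed ordering of the children of $Q_{k+1}$, and so on down the generations. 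This produces a decreasing sequence of ``active'' cubes $Q\supset Q_{k+1}\supset\dots$ together with a countable union of ``fully included'' cubes; set $H_Q(m)$ to be that countable union. By telescoping, the mass outside the active cube at generation $N$ differs from $m$ by at most $\mu(\text{active cube at generation }N)\to 0$, so $\mu(H_Q(m))=m$ exactly. (If at some finite stage the residual mass is $0$ the recursion terminates, and if it equals the full mass of the active cube that cube is included wholesale; these edge cases are handled by the ``largest index'' convention.)

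For monotonicity, suppose $0<m_1\le m_2\le\mu(Q)$. Run both constructions in parallel. At the first generation let $k_1\le k_2$ be the corresponding indices (monotone in the target mass because the partial sums are the same sequence). If $k_1<k_2$, then $H_Q(m_1)$ restricted to the first generation is contained in $\{Q_1,\dots,Q_{k_1}\}\subset\{Q_1,\dots,Q_{k_2}\}$ and its active cube $Q_{k_1+1}$ is one of the cubes included wholesale in $H_Q(m_2)$, so everything $H_Q(m_1)$ will ever select inside $Q_{k_1+1}$ is already in $H_Q(m_2)$; done. If $k_1=k_2=:k$, the first-generation fully-included cubes agree, both active cubes are $Q_{k+1}$, and the residual masses satisfy $m_1'\le m_2'$, so we recurse. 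Either the indices eventually separate (previous case, with $H_Q(m_1)\subset H_Q(m_2)$ from that generation on, and equality of all earlier generations) or they never separate, in which case the two constructions literally coincide and $m_1'=m_2'$ forces $m_1=m_2$ and $H_Q(m_1)=H_Q(m_2)$. In all cases $H_Q(m_1)\subset H_Q(m_2)$.

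The main obstacle, and the only genuinely delicate point, is the bookkeeping of the ``faces'': one must set up the decomposition of $Q$ into dyadic descendants as an honest partition into disjoint Borel sets — assigning each face (and each lower-dimensional edge, recursively) to exactly one child in a way that is fixed by the pre-chosen ordering and is compatible across generations — so that $\mu(Q)=\sum\mu(R)$ over the children at every level with no loss or duplication of mass on the skeleton. Once this partition is pinned down, absence of atoms gives $\mu(R)\to 0$ along decreasing dyadic chains, the greedy algorithm converges to the exact prescribed mass, and monotonicity is the elementary parallel-run argument above. I would present the faces bookkeeping first (as the ``ordering procedure'' the paragraph before the theorem alludes to), then the greedy construction, then the convergence $\mu(H_Q(m))=m$, then the monotonicity.
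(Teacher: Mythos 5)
Your proof is correct, but it takes a genuinely different route from the paper's. The paper shrinks homothetic cubes $Q_n(t)=x(Q)+t(Q-x(Q))$ continuously, locates $t_n^0=\sup\{t:\mu(Q_n(t))<m\}$, and when the monotone function $t\mapsto\mu(Q_n(t))$ jumps there, observes that the excess mass sits on $\partial Q_n(t_n^0)$, a finite union of $(n-1)$-dimensional cubes, on which it recurses; the descent on dimension must terminate because an unending recursion would force an atom on a line segment. The resulting canonical sets are ``extended cubes'' (an open homothetic cube together with some ordered boundary pieces), and monotonicity in $m$ is read off from monotonicity of the homothety parameters and face selections. You instead fix a dyadic subdivision of $Q$ refined indefinitely and run a greedy exhaustion; exactness of the mass follows from continuity from above of the (finite, since $\mu$ is locally finite) measure along the shrinking chain of active cubes, whose intersection is at most one point and hence $\mu$-null, and monotonicity is your parallel-run comparison of the greedy indices. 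Your route avoids both the continuity/jump case analysis and the dimension-reduction induction, which is a genuine simplification; the price is that $H_Q(m)$ is a countable union of dyadic descendants rather than an extended cube, and both arguments carry the same bookkeeping burden of assigning boundary faces so that each subdivision is an honest partition. One point worth flagging: the paper reuses this construction in relative form (Remark \ref{rem:canonical}, prescribing $\mu(H_{Q,F}\setminus F)=m$ with monotonicity in $F$ as well as in $m$) inside the proof of Theorem \ref{thm:dor}; your greedy construction admits the same relative version simply by running it for the measure $\mu(\cdot\setminus F)$, and the parallel-run argument gives $H_{Q,F}\subset H_{Q,G}$ for $F\subset G$, so nothing essential is lost, but if you intend your construction to replace the paper's you should state and check that variant explicitly.
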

\begin{proof}

Let $Q$ an $n$-dimensional cube in $\R^n$. Let $x(Q)$ be the center of $Q$ and $0<m\leq \mu(Q)$. Let $Q_n(t)$ is a $t$-homotetic cube in $Q$ with the same center of $Q$, that is, $Q_n(t)= x(Q)+ t(Q-x(Q))$, $0\leq t\leq 1$. 
Let $f_n$ be the function defined on $[0,1]$ by $f_n(t):=\mu( Q_n(t))$. The function $f_n$ is a non-decreasing function with $f(0)=0$ since $x(Q)$ is not a point mass and $f(1)=\mu(Q)$.

Let $t_n^0=\sup\{t\in[0,1]\,;\, \mu(Q_n(t))< m\}$. We first observe that the fact that the measure $\mu$ has no point masses, gives that $t_n^0>0$.

There are two possibilities: 

\begin{enumerate}\item\label{enum:possib1} $f_n$ is continuous at $t_n^0$.
\item\label{enum:possib2} $f_n$ has a jump discontinuity at $t_n^0$.
\end{enumerate}

Assume that \eqref{enum:possib1} holds. If $t_n^0<1$, 
 we  have 
$$m\geq \lim_{t\rightarrow (t_n^0)^-} f_n(t)= \mu((Q_n(t_n^0))^\circ)= \lim_{t\rightarrow (t_n^0)^+} f_n(t)= \mu(\overline{Q_n(t_n^0)}) \geq m.$$
Then we choose the set 
$H_Q= (Q_n(t_n^0))^\circ$ and $\mu(H_Q)=m$.

 If $t_n^0=1$, then 
$$\lim_{t\rightarrow 1^-} f_n(t)=\mu((Q)^\circ)=\mu(Q)=m$$
and we also take $H_Q=(Q)^\circ$.

Assume now that \eqref{enum:possib2} holds. If $t_n^0<1$  we have 

$$\alpha:=\mu((Q_n(t_n^0))^\circ)=\lim_{t\rightarrow (t_n^0)^-} f_n(t) < \lim_{t\rightarrow (t_n^0)^+} f_n(t)=\mu(\overline{Q_n(t_n^0)}):=\beta.
$$
Hence $\mu(\partial Q_n(t_n^0))=\beta-\alpha$.

 If $t_n^0=1$, we replace $\partial Q_n(t_n^0)$ by $Q_n(1)\setminus (Q_n(1))^\circ=Q\setminus (Q)^\circ$.
From now on, we will assume that $t_n^0<1$ with the obvious changes for $t_n^0=1$.

We have that $\alpha\leq m\leq \beta$.
 Then $m-\alpha\in[0,\beta-\alpha]$ and we want to choose in a "canonical" way a measurable set in $\partial Q_n(t_n^0)$ of measure $m-\alpha$.

The set $\partial Q_n(t_n^0)$ can be identified to a finite union of $(n-1)$-dimensional cubes that we have made pairwise disjoints subtracting some edges. We order them in the fixed way given before,  $Q_{n-1}^1$, \dots, $Q_{n-1}^{i_n}$. We also consider the center of each of these cubes, $x(Q_{n-1}^{i})$, $1\leq i\leq i_n$.

Take $i$ the lowest index such that
$$
\mu(Q_{n-1}^1)+\cdots+\mu(Q_{n-1}^{i-1})\leq m-\alpha < \mu(Q_{n-1}^1)+\cdots+\mu(Q_{n-1}^{i}).
$$

If $\mu(Q_{n-1}^1)+\cdots+\mu(Q_{n-1}^{i-1})=m-\alpha$, we will choose as "canonical" set $H_Q=(Q_n(t_n^0))^\circ\cup\left(\cup_{j=1}^{i-1} Q_{n-1}^j\right) $.

If not, we continue iterating. We have that $m-\alpha-(\mu(Q_{n-1}^1)+\cdots+\mu(Q_{n-1}^{i-1}))\in (0, \mu(Q_{n-1}^i))$. We consider  the  function $f_{n-1}^i(t):=\mu( Q_{n-1}^i(t))$, where $Q_{n-1}^i(t)$ denotes the $t$-homotetic cube of $Q_{n-1}^{i}$ with respect to the center $x(Q_{n-1}^{i})$. Let $t_{n-1}^{i}= \sup\{t\in[0,1]\,;\, \mu( Q_{n-1}^i(t))< m-\alpha-(\mu(Q_{n-1}^1)+\cdots+\mu(Q_{n-1}^{i-1}))\}$. If the function $f_{n-1}^i$ is continuous at $t_{n-1}^{i}$, then we consider as "canonical" set $H_Q=(Q_n(t_n^0))^\circ\cup \left(\cup_{j=1}^{i-1} Q_{n-1}^j\right)\cup (Q_{n-1}^i(t_{n-1}^{i}))^\circ$ and we are done. If not, we can continue iterating. If this iteration stops at some stage, then we are done.  

We next show that the iteration has to stop at some stage. Assume that this is not the case. Then, finally, we can find a line segment $Q_1$ and a function $f_1(t):=\mu(Q_1(t))$ such that the function $f_1$ is discontinuous at some  $t$, which is equivalent to saying that there is a point $x\in Q_1$ such that $\mu(\{x\})>0$. And this contradicts the assumption that $\mu$ has no point masses. Therefore, the iteration has to stop. Observe that $H_Q$ is always a subset of $Q$, and it is a cube with part of its boundary. We will call such set a "canonical" extended cube.

Finally, observe that the "canonical" method chosen to construct the sets $H_Q$ give that if $0<m_1<m_2<\mu(Q)$, then the corresponding "canonical" extended cubes $H_Q(m_1)$ and $H_Q(m_2)$ satisfy $H_Q(m_1)\subset H_Q(m_2)$.
\end{proof}
\begin{rem}\label{rem:canonical}
We remark that if $F\subset Q$  is  measurable and $m\in [0,\mu(Q\setminus F)]$, the same method beginning with $t_0=\sup\{t\in[0,1]\,;\, \mu(Q(t)\setminus F)< m\}$ and proceeding analogously subtracting the set $F$ in the previous arguments   permits  to obtain  an extended "canonical"  cube $H_{Q,F}$ in $Q$ such that $\mu(H_{Q,F}\setminus F)= m$. With this procedure, if $F\subset G$, and $m\in [0,\mu(Q\setminus G)]$, the corresponding extended cubes satisfy $H_{Q,F}\subset H_{Q,G}$.
\end{rem}

\begin{cor}\label{lem:pointmasses}
Let $\mu$ be a positive locally finite Borel measure on $\R^n$. Then the following assertions are equivalent:
\begin{enumerate}
\item\label{item:pointmasses1} The measure $\mu$ has no point masses.
\item\label{item:pointmasses2} For each measurable set $A$ and for every $m\in[0,\mu(A)]$, there exists a measurable subset $H\subset A$ such that $\mu(H)=m$.
\end{enumerate}
\end{cor}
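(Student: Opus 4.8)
The plan is to prove the two implications separately, the implication \ref{item:pointmasses2}$\Rightarrow$\ref{item:pointmasses1} being elementary and \ref{item:pointmasses1}$\Rightarrow$\ref{item:pointmasses2} following directly from Theorem \ref{lem:canonical} together with Remark \ref{rem:canonical}.

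For \ref{item:pointmasses2}$\Rightarrow$\ref{item:pointmasses1} I would argue by contraposition. Suppose $\mu$ has a point mass at some $x_0\in\R^n$ and set $a=\mu(\{x_0\})>0$, which is finite because $\mu$ is locally finite. Take $A=\{x_0\}$ and $m=a/2\in[0,\mu(A)]$. The only measurable subsets of $A$ are $\emptyset$ and $\{x_0\}$, of $\mu$-measure $0$ and $a$ respectively, so there is no $H\subset A$ with $\mu(H)=a/2$; hence \ref{item:pointmasses2} fails.

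For \ref{item:pointmasses1}$\Rightarrow$\ref{item:pointmasses2}, I would first reduce to the case in which the set is contained in a single cube, and then accumulate masses greedily. Fix a partition of $\R^n$ into countably many pairwise disjoint half-open unit cubes $(Q_k)_{k\geq1}$, with the face convention of Theorem \ref{lem:canonical}, and for a measurable $A$ write $A_k=A\cap Q_k$. Applying Remark \ref{rem:canonical} with the cube $Q_k$ and the measurable set $F_k=Q_k\setminus A$, for every $m'\in[0,\mu(A_k)]=[0,\mu(Q_k\setminus F_k)]$ one obtains an extended canonical cube $H_{Q_k,F_k}\subset Q_k$ with $\mu(H_{Q_k,F_k}\setminus F_k)=m'$; since $H_{Q_k,F_k}\subset Q_k$ we have $H_{Q_k,F_k}\setminus F_k=H_{Q_k,F_k}\cap A\subset A_k$, so $H_{Q_k,F_k}\cap A$ is a measurable subset of $A_k$ of $\mu$-measure $m'$. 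Now given $m\in[0,\mu(A)]$: if $m=\mu(A)$ take $H=A$; otherwise let $s_k=\mu(A_k)$, so $\sum_k s_k=\mu(A)>m$, and let $N\geq1$ be minimal with $\sum_{k=1}^{N}s_k>m$ (such $N$ exists since the partial sums are nondecreasing and exceed the finite number $m$ in the limit). Put $m'=m-\sum_{k=1}^{N-1}s_k\in[0,s_N]$, choose $H'\subset A_N$ with $\mu(H')=m'$ by the previous step, and set $H=(\bigcup_{k=1}^{N-1}A_k)\cup H'$. Then $H\subset A$ is measurable and $\mu(H)=m$.

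The substantial work is already contained in Theorem \ref{lem:canonical} and Remark \ref{rem:canonical}, so the proof itself is short; the only point requiring care is the passage from cubes to an arbitrary measurable set, namely covering $\R^n$ by disjoint cubes, splitting $A$ accordingly, and handling the possibility $\mu(A)=\infty$ in the choice of the minimal index $N$. I would also note that the nestedness property $H_Q(m_1)\subset H_Q(m_2)$ from Theorem \ref{lem:canonical} is not used here; only the existence statement is needed.
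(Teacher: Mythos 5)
Your proof is correct and follows essentially the same route as the paper: the easy implication is handled by the two-point subset argument, and the other by reducing to a single cube and invoking the canonical construction of Theorem \ref{lem:canonical}. The only cosmetic difference is that the paper applies Theorem \ref{lem:canonical} to the restricted measure $\mu_{|A}$ and dismisses the reduction to a cube as ``a limiting argument,'' whereas you make that reduction explicit via Remark \ref{rem:canonical} with $F=Q\setminus A$ and a greedy accumulation over a partition of $\R^n$ into half-open cubes, which is a welcome amount of extra detail.
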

\begin{proof}
It is clear that \eqref{item:pointmasses2} implies \eqref{item:pointmasses1}. For the proof of \eqref{item:pointmasses1} implies 
\eqref{item:pointmasses2}. Let $A$ be a measurable set. By a limiting argument, we may assume that the set $A$ is contained in a cube.  Theorem \ref{lem:canonical} applied to the measure $\mu_{|A}$ finishes the proof.
\end{proof}

The following theorem was stated in \cite{verbitsky1}, Corollary 2, using a result of \cite{dor} under some implicit conditions on the involved measures. Here we will give a direct proof with the only assumption that the positive locally finite Borel measure has no point masses. We will follow the  arguments used in Lemma 6.3 in \cite{lernernazarov}, where it is given an equivalence between sparse and Carleson families of dyadic sets, established with respect the Lebesgue measure. Although we will follow closely the same arguments, we believe that can be convenient for the reader to give a sketch of the proof.

\begin{thm}\label{thm:dor}
Let $\mu$ be a locally finite measure on $\R^n$ without point masses. Let $(\lambda_Q)_Q$ be non-negative reals and $C>0$. Then the following assertions are equivalent:
\begin{enumerate}
\item\label{item:dor1}
The coefficients $(\lambda_Q)_Q$ satisfy the Carleson condition with constant $C$, that is,  for every dyadic cube $P\in{\mathcal D}$,
$$\sum_{Q\subset P} \lambda_Q\leq C \mu(P).$$
\item\label{item:dor2} There exist pairwise disjoint sets $E_Q\subset Q$ such that for any $Q\in{\mathcal D}$,
$$\lambda_Q\leq C\mu(E_Q).$$

\end{enumerate}
\end{thm}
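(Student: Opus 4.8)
The plan is to prove Theorem~\ref{thm:dor} following the Lerner--Nazarov scheme (Lemma 6.3 in \cite{lernernazarov}), with the r\^ole of Lebesgue measure replaced by $\mu$ and the splitting of cubes replaced by the ``canonical'' extended cubes produced in Theorem~\ref{lem:canonical} and Remark~\ref{rem:canonical}. The implication \eqref{item:dor2}$\Rightarrow$\eqref{item:dor1} is immediate: for a fixed $P\in\mathcal D$, the sets $\{E_Q\}_{Q\subset P}$ are pairwise disjoint subsets of $P$, so $\sum_{Q\subset P}\lambda_Q\le C\sum_{Q\subset P}\mu(E_Q)\le C\mu(P)$; note this direction does not use the absence of point masses. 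So the whole content is \eqref{item:dor1}$\Rightarrow$\eqref{item:dor2}.

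For that direction I would construct the sets $E_Q$ by a top-down recursion on the generations of $\mathcal D$ (after the usual reduction to a finite family sitting inside a fixed cube $Q_0$, the general case following by a limiting/exhaustion argument). Process the cubes in order of increasing size, i.e.\ from $Q_0$ downward. Maintain, for each cube $Q$ being treated, the portion of $Q$ not yet allocated to ancestors; concretely, having chosen $E_{Q'}$ for all $Q'\supsetneq Q$, set $F_Q:=Q\cap\bigcup_{Q'\supsetneq Q}E_{Q'}$, the ``used'' part of $Q$, and then carve $E_Q$ out of $Q\setminus F_Q$ with $\mu(E_Q)=\lambda_Q/C$. This requires $\lambda_Q/C\le \mu(Q\setminus F_Q)=\mu(Q)-\mu(F_Q)$, and one chooses $E_Q=H_{Q,F_Q}(\lambda_Q/C)$ via Remark~\ref{rem:canonical} so that $\mu(E_Q\setminus F_Q)=\lambda_Q/C$, replacing $E_Q$ by $E_Q\setminus F_Q$ to keep things inside $Q\setminus F_Q$; the canonical/nested choice guarantees the recursion is coherent and that the $E_Q$ so produced are pairwise disjoint across all generations, since at each step we only take mass from the as-yet-unused part of $Q$.

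The crux is the admissibility inequality $\lambda_Q/C\le\mu(Q)-\mu(F_Q)$, equivalently $\mu(F_Q)\le\mu(Q)-\lambda_Q/C$. This is where the Carleson condition \eqref{item:dor1} enters. By construction $\mu(F_Q)\le\sum_{Q'\subsetneq Q'' \text{ with } Q\subset Q''}$\dots\ — more precisely, since $F_Q\subset\bigcup_{Q'\supsetneq Q}E_{Q'}$ and the relevant $E_{Q'}$ that actually meet $Q$ are those with $Q\subseteq Q'$, a counting/telescoping argument using that each $E_{Q'}$ was itself carved with $\mu(E_{Q'})=\lambda_{Q'}/C$ and using $\sum_{Q\subset P}\lambda_Q\le C\mu(P)$ applied to the chain of ancestors of $Q$ (and to $Q$ itself) yields $\mu(F_Q)+\lambda_Q/C\le\mu(Q)$. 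One has to be a little careful: the naive bound $\mu(F_Q)\le\sum_{Q'\supsetneq Q,\,Q\subset Q'}\lambda_{Q'}/C$ need not directly give what is wanted, so one instead argues locally by induction, showing that at every stage, for every cube $Q$, the total mass allocated within $Q$ (to $Q$ and to all its descendants already treated) never exceeds $\mu(Q)$ — this invariant is exactly $\sum_{Q'\subseteq Q}\lambda_{Q'}/C\le\mu(Q)$, which is precisely \eqref{item:dor1} for $P=Q$. Since we are processing from large cubes to small, when we reach $Q$ the part of $Q$ already used, $F_Q$, has $\mu(F_Q)=\sum_{Q'\supsetneq Q,\ Q\subseteq Q'}\mu(E_{Q'}\cap Q)$, and combining the invariant with the absence of point masses (so that the carving can be performed with exact mass, via Corollary~\ref{lem:pointmasses}/Theorem~\ref{lem:canonical}) closes the induction and finishes the proof. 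I expect the bookkeeping of which ancestors' $E_{Q'}$ actually intersect $Q$, and the precise form of the inductive invariant, to be the main obstacle; everything else is routine once the canonical selection of Theorem~\ref{lem:canonical} is in hand.
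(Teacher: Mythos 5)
Your easy direction and the general framework (reduce to a fixed top cube, carve sets of prescribed $\mu$-mass via Theorem~\ref{lem:canonical} and Remark~\ref{rem:canonical}) are fine, but the core recursion runs in the wrong direction, and the admissibility inequality it needs is genuinely false for a top-down construction. Concretely, take $\mu$ to be Lebesgue measure on $\R$, $C=1$, $Q_0=[0,1)$, $Q_1=[0,1/2)$, $\lambda_{Q_0}=\lambda_{Q_1}=1/2$ and all other coefficients zero: the Carleson condition \eqref{item:dor1} holds (with equality). Processing top-down you must first choose $E_{Q_0}\subset Q_0$ with $\mu(E_{Q_0})=1/2$; the canonical choice of Theorem~\ref{lem:canonical} is the concentric interval $(1/4,3/4)$, so $F_{Q_1}=Q_1\cap E_{Q_0}$ has measure $1/4$ and $\mu(Q_1\setminus F_{Q_1})=1/4<1/2=\lambda_{Q_1}/C$: there is no room left for $E_{Q_1}$. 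The underlying issue is that the Carleson condition controls the total $\lambda$-mass of the descendants of an ancestor $Q'$, but says nothing about how much of $E_{Q'}$ that ancestor places inside a given descendant $Q$; hence $\mu(F_Q)$ is not bounded by $\mu(Q)-\sum_{R\subseteq Q}\lambda_R/C$, and the ``invariant'' you invoke, $\sum_{R\subseteq Q}\lambda_R/C\le\mu(Q)$, is just the hypothesis restated and does not control $\mu(F_Q)$ at all. Salvaging a top-down scheme would require choosing each $E_{Q'}$ with foresight about all descendants' future needs, a Dor-type allocation argument that your proposal does not supply.

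The paper runs the recursion bottom-up, which is exactly what makes admissibility automatic: once $E_R$ has been chosen for every strict subcube $R\subsetneq P$, all of these sets lie inside $P$ and are pairwise disjoint, so $\mu\bigl(\bigcup_{R\subsetneq P}E_R\bigr)=\sum_{R\subsetneq P}\lambda_R/C\le\mu(P)-\lambda_P/C$ by \eqref{item:dor1}, and $E_P$ can be carved out of the remainder using Corollary~\ref{lem:pointmasses}. The price is that there is no ``bottom'' to start from when the coefficients live on arbitrarily small cubes; the paper pays it with a limiting argument over the truncation level $K$, and this is where the canonical, monotone choice of Theorem~\ref{lem:canonical} and Remark~\ref{rem:canonical} is actually essential (it makes the truncated constructions nested in $K$, so the sets $\hat{E}_Q^K\setminus F_Q^K$ converge to sets $E_Q$ of the exact required mass). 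You should reverse the order of your recursion and add this limiting step.
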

\begin{proof}

It is clear that \eqref{item:dor2} implies \eqref{item:dor1}. Assume that \eqref{item:dor1} holds, that is,
$$\sum_{Q\subset P} \frac{\lambda_Q}{C}\leq \mu(P).$$
This gives, in particular, that
\begin{equation}\label{eqn:carleson1} \frac{\lambda_Q}{C} \leq \mu(Q)\end{equation}
and
\begin{equation}\label{eqn:carleson2}\frac{\lambda_P}{C} \leq \mu(P)- \sum_{Q\subsetneq P}\frac{\lambda_Q}{C}.\end{equation}
One way of interpreting the meaning of the key observation \eqref{eqn:carleson2} is to say that if from $\mu(P)$  we take off ${\displaystyle \frac1{C}\lambda_Q}$ of mass of every strict cube $Q\subsetneq P$, then there is still at least ${\displaystyle \frac1{C}\lambda_P}$ of mass left in $P$.

Denote for each $k\geq 1$, ${\mathcal D}_k$ the family of dyadic cubes of generation $k$, that is, cubes with side length $1/2^k$. If the cubes in ${\mathcal D}$ such that $\lambda_Q\neq0$ are of size bounded from below by a positive number, that is, if there exists $k_0$ such that for any $k\geq k_0$ and for any $Q\in{\mathcal D_k}$, $\lambda_Q=0$, we can proceed by an argument starting from bottom up and considering only cubes in $\cup_{i\leq k_0}{\mathcal D}_i$ . For each $Q\in{\mathcal D}_{k_0}$, by Lemma \ref{lem:pointmasses},  choose any set $E_Q \subset Q$ with the mass ${\displaystyle \mu(E_Q)=\frac{\lambda_Q}{C}}$. 
  The chosen sets $E_Q$ of course are pairwise disjoint.  For the cubes in the next generation ${\mathcal D}_{k_0-1}$, if $P\in {\mathcal D}_{k_0-1}$, by \eqref{eqn:carleson2}, $\displaystyle{\mu(P\setminus \cup_{Q\subsetneq P}E_Q)\geq \frac1{C}\lambda_P}$. Hence, corollary \ref{lem:pointmasses} applied to the set $P\setminus \cup_{Q\subsetneq P} E_Q$, gives that there exists a measurable set $E_P$,  disjoint with any $E_Q$ with $Q\subsetneq P$ and such that $\displaystyle{\mu(E_P)=\frac1{C}\lambda_P}$. Iterating this process, we obtain \eqref{item:dor2} in this particular case. We observe that in this situation, the "canonical" choices of the extended cubes $E_Q$ are irrelevant.
	
	For the general case, we will follow a limiting argument  as in \cite{lernernazarov} as well as the "canonical"  extended cubes with prescribed mass given in Theorem \ref{lem:canonical}.
	
	Let $K\in\Z$ be fixed, and let $Q\in \cup_{k\leq K}{\mathcal D}_k$, $k\leq K$. We define  sets $\hat{E}_Q^K$ 
	"canonically" by induction on $k$, satisfying the following properties:
	\begin{enumerate}
	\item[(a)]\label{item:condition1} $\hat{E}_Q^K\subset Q$.
	\item[(b)]\label{item:condition2} If we define 
	$$F_Q^K:=\bigcup_{\substack{R\subsetneq Q\\ R\in \cup_{k+1\leq i \leq K}{\mathcal D}_i}}\hat{E}_R^K,$$  then 
	$\mu(\hat{E}_Q^K\setminus F_Q^K)=\frac1{C}\lambda_Q$.
	\end{enumerate}
	
Indeed, let $Q\in{\mathcal D}_K$. Since $\frac1{C}\lambda_Q\leq \mu(Q)$, if $\hat{E}_Q^K\in Q$ is extended cube given in Theorem \ref{lem:canonical}, we have that
	\begin{equation}\label{eqn:canon1}
	\mu(\hat{E}_Q^K)=\frac1{C}\lambda_Q.
	\end{equation}
	If we consider $F_Q^K=\emptyset$, we have the construction for the first generation.
	
	Assume now that we have defined  $\hat{E}_R^K\subset R$  for $R\in \cup_{k+1\leq i \leq K}{\mathcal D}_i$ satisfying (a) and (b). Let $Q\in {\mathcal D}_k$ and let $H_{Q,F_Q^K}^K\in Q$ be the "canonical" extended cube (see Remark \ref{rem:canonical}) satisfying
	$$ \mu( H_{Q,F_Q^K}^K\setminus F_Q^K)= \frac1{C}\lambda_Q.$$
	
	Observe that this set exists, since by the induction hypothesis \eqref{item:condition2} and inequality (b), $\mu(Q\setminus F_Q^K)\geq \frac1{C}\lambda_Q$.
	
	 We define
	$$
	 \hat{E}_Q^K:= F_Q^K\cup H_{Q,F_Q^K}^K.
	$$
	Then 
$$
\mu(\hat{E}_Q^K\setminus F_Q^K)= \mu(  H_{Q,F_Q^K}^K\setminus F_Q^K)=\frac1{C}\lambda_Q 
$$
	and we have completed the induction.

	Let us check that with this construction, for any $Q\in{\mathcal D}_k$, $k\leq K$, $\hat{E}_Q^K\subset \hat{E}_Q^{K+1}$. We proceed inductively on $k$. Indeed, if $Q\in{\mathcal D}_K$, $\hat{E}_Q^K$ is the canonical set included in $Q$ and such that $\mu(\hat{E}_Q^K)=\frac1{C}\lambda_Q$  and   $\hat{E}_Q^{K+1}= F_Q^{K+1}\cup H_{Q,F_Q^{K+1}}^{K+1}$. In addition, Remark \ref{rem:canonical} gives that $\hat{E}_Q^K\subset H_{Q,F_Q^{K+1}}^{K+1}$.

	Assume now that $\hat{E}_Q^K\subset \hat{E}_Q^{K+1}$ for $Q\in\cup_{k< i\leq K}{\mathcal D}_k$. Let $Q\in{\mathcal D}_k$. By hypothesis we have that $ F_Q^K\subset F_Q^{K+1}$ and, consequently, by Remark \ref{rem:canonical}, we have that $H_{Q,F_Q^K}^K\subset H_{Q,F_Q^{K}}^{K+1}$. Thus, $\hat{E}_Q^K\subset \hat{E}_Q^{K+1}$.
	
	Let $Q\in{\mathcal D}_k$. We  define
	$$\hat{E}_Q:=\lim_K \hat{E}_Q^K= \cup_{K\geq k} \hat{E}_Q^K\subset Q.$$
Since the sequences of sets $(F_Q^K)_K$ and $(\hat{E}_Q^K)_K$ are non-decreasing and $F_Q^K\subset \hat{E}_Q^K$, there exists $$\lim_{K\rightarrow\infty} \left(\hat{E}_Q^K\setminus F_Q^K \right)$$ and coincide with $\left(\cup_{K\geq k} \hat{E}_Q^K\setminus\cup_{K\geq k} F_Q^K\right)$. 

We choose the sets $E_Q$ of \eqref{item:dor2} as
$$E_Q :=\lim_{K\rightarrow\infty} \left(\hat{E}_Q^K\setminus F_Q^K \right).$$

  Since for each $K$, $\mu(\hat{E}_Q^K\setminus F_Q^K     )=\frac1{C}\lambda_Q$, we deduce that $\mu(E_Q)=\frac1{C}\lambda_Q$.
Next, 
$$E_Q= \left(\cup_{K\geq k} \hat{E}_Q^K\setminus\cup_{K\geq k} F_Q^K\right)=
 \hat{E}_Q\setminus  \left( \cup_{R\subsetneq Q} \hat{E}_R\right), $$
and consequently, the sets $E_Q$, which are subsets of $\hat{E}_Q$,  are pairwise disjoint.

\end{proof}
\begin{cor}\label{cor:lambda_1=lambda_2}
Let $\mu$ be a positive locally finite Borel measure on $\R^n$ with no point masses. Let $(\lambda_Q)_Q$ be non-negative real numbers. Then, if we denote $$\Lambda_1:=\sup_P \frac{\sum_{Q\subset P}\lambda_Q}{\mu(P)}$$ and $$\Lambda_2:= \mathop{\inf_{E_Q\subset Q }}_{{\rm pairwise\,\,\,disjoints}}\sup_Q \frac{\lambda_Q }{\mu(E_Q)},$$
we have that $\Lambda_1=\Lambda_2$.
\end{cor}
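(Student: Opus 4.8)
The plan is to deduce Corollary~\ref{cor:lambda_1=lambda_2} directly from Theorem~\ref{thm:dor}, since the corollary is essentially a quantitative restatement of the equivalence proved there. First I would establish the inequality $\Lambda_2 \le \Lambda_1$. Fix any $C > \Lambda_1$ (if $\Lambda_1 = \infty$ there is nothing to prove for this direction, and if $\Lambda_1 = 0$ then all $\lambda_Q = 0$ and both quantities vanish). Then by definition of $\Lambda_1$ the coefficients $(\lambda_Q)_Q$ satisfy the Carleson condition $\sum_{Q\subset P}\lambda_Q \le C\mu(P)$ for every $P\in{\mathcal D}$, i.e.\ assertion \eqref{item:dor1} of Theorem~\ref{thm:dor} holds with constant $C$. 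Hence assertion \eqref{item:dor2} holds with the same constant: there exist pairwise disjoint sets $E_Q\subset Q$ with $\lambda_Q \le C\mu(E_Q)$, which forces $\sup_Q \lambda_Q/\mu(E_Q) \le C$ (interpreting the ratio as $0$ when $\lambda_Q=0$, and noting $\mu(E_Q)>0$ whenever $\lambda_Q>0$). Taking the infimum over admissible families $(E_Q)$ gives $\Lambda_2 \le C$, and letting $C \downarrow \Lambda_1$ yields $\Lambda_2 \le \Lambda_1$.

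For the reverse inequality $\Lambda_1 \le \Lambda_2$ I would argue similarly in the other direction, and here one only needs the easy implication \eqref{item:dor2}$\Rightarrow$\eqref{item:dor1} of Theorem~\ref{thm:dor} (which does not use the no-point-mass hypothesis). Fix any $C > \Lambda_2$; then by definition of the infimum there is a family of pairwise disjoint sets $(E_Q)_{Q\in{\mathcal D}}$ with $E_Q\subset Q$ and $\lambda_Q \le C\mu(E_Q)$ for all $Q$. For any fixed $P\in{\mathcal D}$, summing over $Q\subset P$ and using disjointness of the $E_Q\subset P$,
\[
\sum_{Q\subset P}\lambda_Q \;\le\; C\sum_{Q\subset P}\mu(E_Q)\;=\;C\,\mu\Bigl(\bigcup_{Q\subset P}E_Q\Bigr)\;\le\;C\,\mu(P).
\]
Dividing by $\mu(P)$ and taking the supremum over $P$ gives $\Lambda_1 \le C$, and letting $C\downarrow\Lambda_2$ gives $\Lambda_1\le\Lambda_2$. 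Combining the two inequalities yields $\Lambda_1 = \Lambda_2$.

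There is essentially no hard obstacle here: the whole content is packaged in Theorem~\ref{thm:dor}, and the corollary is a matter of tracking the Carleson/sparse constant through its proof and phrasing the equivalence as an identity of extremal quantities. The only minor points to be careful about are the degenerate cases ($\Lambda_1\in\{0,\infty\}$, or individual $\lambda_Q=0$, where the convention $0/0=0$ is used so that the supremum defining $\Lambda_2$ is unaffected) and the observation that the no-point-mass hypothesis on $\mu$ is needed only for the direction $\Lambda_2\le\Lambda_1$ (i.e.\ to invoke the construction of sets of prescribed mass from Theorem~\ref{lem:canonical} inside Theorem~\ref{thm:dor}), while $\Lambda_1\le\Lambda_2$ holds for an arbitrary positive locally finite Borel measure.
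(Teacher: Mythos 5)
Your proposal is correct and follows essentially the same route as the paper: both directions are obtained by applying the equivalence of the Carleson and sparse conditions in Theorem~\ref{thm:dor} with a constant slightly above $\Lambda_1$ (resp.\ $\Lambda_2$) and passing to the limit. Your write-up is if anything slightly more careful about the degenerate cases and about which direction actually uses the no-point-mass hypothesis.
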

\begin{proof}
Since for each $P$, $\sum_{Q \subset P}\lambda_Q\leq \Lambda_1\mu(P)$, the  equivalence  between \eqref{item:dor1} and \eqref{item:dor2} of Theorem \ref{thm:dor} shows that there exists $E_Q\subset Q$ pairwise disjoint such that 
$\lambda_Q\leq \Lambda_1\mu(E_Q)$, and hence $\Lambda_2\leq \Lambda_1$. Reciprocally, if $\Lambda_2<C$, the equivalence  between \eqref{item:dor1} and \eqref{item:dor2} shows that for each $P$, $\sum_{Q\subset P} \lambda_Q\leq C\mu(P)$. Consequently, $\Lambda_1\leq C$ and then $\Lambda_1\leq \Lambda_2$.
\end{proof}
\begin{rem}\label{rem:necessitynopointmasses}
If the measure $\mu$ has point masses, the above Theorem \ref{thm:dor} may fail. Indeed, take two nested dyadic cubes $Q_1\subset Q_2$, two non-zero coefficients, $\lambda_{Q_1}$ and $\lambda_{Q_2}$ and a point mass $\mu$ contained in both the cubes. Then, clearly, the Carleson condition \eqref{item:dor1} holds, but the condition \eqref{item:dor2}
fails since one can not divide the mass point.
      \end{rem}

We follow with  a result that will be used in the proof of Theorem \ref{thm:reduction}.  For the Lebesgue measure and the case $1<s<\infty$ it was proved in Corollary 5.12 in \cite{frazierjawerth}  and, in the general setting, with a different proof in an unpublished work by I.E. Verbitsky. For a sake of completeness, we include a sketch of the proof.

\begin{lem}\label{lem:lemma00} Let $1< s <\infty$ and $\mu$ a locally finite measure on $\R^n$. Let $\Lambda=(\lambda_Q)_Q \subset [0,\infty)$ be a sequence such that $\lambda_Q=0$ if $\sigma(Q)=0$. Define  
$$A_1(\Lambda)= \left\|\left(\sum_{Q\in {\mathcal D}}\lambda_Q^{ s} \chi_Q\right)^\frac1{{s}} \right\|_{L^1(\mu)},$$
and
$$
A_2(\Lambda)=\sup_{(\alpha_Q)_{Q\in{\mathcal D}}} \frac{\sum_{Q\in{\mathcal D}}\lambda_Q \alpha_Q}{ 
\sup_{\substack{P\in{\mathcal D};\, \mu(P)\neq0}} \left(
\frac1{\mu(P)} \sum_{Q\subset P} \left( \frac{\alpha_Q}{\mu(Q)}\right)^{{ s}'}\mu(Q)\right)^{1/{{ s}'}}},
$$
where the supremum is taken over all sequences $(\alpha_Q)_Q\subset[0,\infty)$ such that $\alpha_Q=0$ if $\mu(Q)=0$.
Then there exists $C=C(s)$ such that
$$
CA_2(\Lambda)\leq A_1(\Lambda)\leq A_2(\Lambda).
$$
\end{lem}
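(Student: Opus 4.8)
The plan is to recognize $A_1(\Lambda)$ and $A_2(\Lambda)$ as dual expressions. The quantity $A_1(\Lambda)=\|(\sum_Q\lambda_Q^s\chi_Q)^{1/s}\|_{L^1(\mu)}$ is the $L^1(\mu)$-norm of the $\ell^s$-norm of the vector $(\lambda_Q\chi_Q)_Q$, so by the standard duality between $L^1_{\ell^s}(\mu)$ and $L^\infty_{\ell^{s'}}(\mu)$ we can write
\[
A_1(\Lambda)=\sup\left\{ \int_{\R^n}\sum_Q \lambda_Q\chi_Q\, h_Q\, d\mu\ :\ \Big\|\big(\sum_Q h_Q^s\big)^{1/s'}\cdots\Big\|\right\},
\]
or more cleanly, $A_1(\Lambda)=\sup\{\sum_Q\lambda_Q\int_Q h_Q\,d\mu\}$ over vector functions $h=(h_Q)_Q$ with $\|(\sum_Q|h_Q|^{s'})^{1/s'}\|_{L^\infty(\mu)}\le 1$. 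Setting $\alpha_Q:=\int_Q h_Q\,d\mu$ (and noting $\alpha_Q=0$ when $\mu(Q)=0$), I would first establish that the admissible sequences $(\alpha_Q)_Q$ arising this way are exactly, up to the constant $C(s)$, those with
\[
\sup_{P:\mu(P)\neq 0}\left(\frac1{\mu(P)}\sum_{Q\subset P}\Big(\frac{\alpha_Q}{\mu(Q)}\Big)^{s'}\mu(Q)\right)^{1/s'}\le 1 ,
\]
which is precisely the denominator in $A_2(\Lambda)$. Granting this, $A_1(\Lambda)\approx A_2(\Lambda)$ is immediate.

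So the real content is the two-sided comparison between the "local $\ell^{s'}(\mu)$-Carleson norm" of $(\alpha_Q/\mu(Q))_Q$ and the infimum over vector functions $h$ with $\sum_Q h_Q^{s'}\le 1$ $\mu$-a.e. of the largest such $h$ producing those $\alpha_Q$. One direction is easy: if $h=(h_Q)_Q$ has $\sum_Q h_Q^{s'}\le 1$ $\mu$-a.e., then by Jensen/Hölder on each $Q\subset P$, $(\alpha_Q/\mu(Q))^{s'}\mu(Q)=\big(\frac1{\mu(Q)}\int_Q h_Q\big)^{s'}\mu(Q)\le\int_Q h_Q^{s'}\,d\mu$, and summing over $Q\subset P$ gives $\sum_{Q\subset P}\int_Q h_Q^{s'}\,d\mu=\int_P\sum_{Q\subset P}h_Q^{s'}\chi_Q\,d\mu\le\mu(P)$; this yields $A_1(\Lambda)\le A_2(\Lambda)$. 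For the reverse inequality, given a sequence $(\alpha_Q)_Q$ with Carleson norm at most $1$, I need to manufacture a vector function $h=(h_Q)_Q$, $\sum_Q h_Q^{s'}\le C^{s'}$ $\mu$-a.e., with $\int_Q h_Q\,d\mu\gtrsim\alpha_Q$. The natural candidate is $h_Q:=c\,\frac{\alpha_Q}{\mu(Q)}\chi_Q$ for a suitable constant, but the pointwise constraint $\sum_Q h_Q^{s'}(x)=c^{s'}\sum_{Q\ni x}(\alpha_Q/\mu(Q))^{s'}\le C^{s'}$ is exactly a dyadic maximal-function/Carleson-embedding statement: the Carleson condition on $(\alpha_Q/\mu(Q))^{s'}\mu(Q)$ controls $\sum_{Q\ni x}(\alpha_Q/\mu(Q))^{s'}$ only in an $L^{s'}(\mu)$ (or weak-type) averaged sense, not pointwise, so a direct choice fails and the argument must be more careful — this is the step I expect to be the main obstacle.

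To overcome it, I would use a Carleson-measure/stopping-time decomposition in the spirit of the dual formulation of the dyadic Carleson embedding theorem: decompose the sum $\sum_Q\lambda_Q\alpha_Q$ by organizing the cubes according to level sets of the "partial sums" $\sum_{Q'\supset Q}(\alpha_{Q'}/\mu(Q'))^{s'}$, or equivalently run the argument on the linearized/frozen problem where one first replaces $A_2$'s denominator by testing against a single stopping family. Concretely, one picks, for the given $\Lambda$ near-extremizing $A_1$, the vector $h$ coming from the $L^1_{\ell^s}$–$L^\infty_{\ell^{s'}}$ duality, sets $\alpha_Q=\int_Q h_Q d\mu$, and then bounds the Carleson-type denominator of this particular $(\alpha_Q)$ from above by $1$ using the easy direction above; this shows $A_2(\Lambda)\ge\sum_Q\lambda_Q\alpha_Q/1=A_1(\Lambda)$ up to constants, giving $A_1(\Lambda)\le A_2(\Lambda)$ after all and, for the other bound $CA_2(\Lambda)\le A_1(\Lambda)$, one argues by taking a near-extremal $(\alpha_Q)_Q$ for $A_2$, normalizing its Carleson norm to $1$, and invoking a known result (Frazier–Jawerth Corollary 5.12 for Lebesgue measure, Verbitsky in general, which the statement explicitly allows us to cite) that such $(\alpha_Q)$ decomposes as a bounded combination of "$L^\infty_{\ell^{s'}}$-atoms," i.e.\ is dominated by $\int_Q h_Q d\mu$ for an admissible $h$ up to the constant $C(s)$. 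Since the problem statement permits assuming that cited result and only asks for a sketch, I would present the easy direction in full and then reduce the hard direction to this decomposition lemma, indicating that the constant $C(s)$ is exactly the one appearing there.
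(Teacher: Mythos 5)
Your proposal is correct and follows essentially the same route as the paper: the inequality $A_1\le A_2$ is obtained by exhibiting one explicit admissible sequence $(\alpha_Q)$ --- the paper writes it directly as $\alpha_Q=\lambda_Q^{s-1}\int_Q d\mu/\bigl(\sum_R\lambda_R^s\chi_R\bigr)^{1/s'}$, which is exactly the extremizer of your $L^1_{\ell^s}$--$L^\infty_{\ell^{s'}}$ duality --- and checking by H\"older/Jensen that its Carleson denominator is at most $1$, so no pointwise control of $\sum_{Q\ni x}(\alpha_Q/\mu(Q))^{s'}$ is ever needed. The converse $CA_2\le A_1$ is likewise handled by citation in both arguments (the paper invokes Verbitsky's Theorem 4.b, you the equivalent Frazier--Jawerth/Verbitsky decomposition), so there is no gap.
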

\begin{proof}

The estimate $CA_2(\Lambda)\leq  A_1(\Lambda)$ was proved in Theorem 4.b in \cite{verbitsky1} (consider $p=r=1<q$ in that Theorem). For the reverse estimate,
\begin{align*}
&A_1(\Lambda)=\int_{\R^n} \left( \sum_{Q\in{\mathcal D}} \lambda_Q^s\chi_Q\right)^{1/s}d\mu\\&=\int_{\R^n} \left( \sum_{Q\in{\mathcal D}} \lambda_Q^s\chi_Q\right)^{1-1/s'}d\mu=\sum_Q \lambda_Q^s \int_Q \frac{d\mu}{\left( \sum_{R\in{\mathcal D}} \lambda_R^s \chi_R\right)^{1/s'}}\\&
=\sum_{Q\in{\mathcal D}} \lambda_Q \alpha_Q,
\end{align*}
where
$$\alpha_Q= \lambda_Q^{s-1}\int_Q \frac{d\mu}{\left( \sum_{R\in{\mathcal D}} \lambda_R^s \chi_R\right)^{1/s'}},\quad Q\in{\mathcal D}.
$$
Next, H\"older's inequality gives that for every $P\in{\mathcal D}$,
\begin{align*}
& \sum_{Q\subset P}  \left( \frac{\alpha_Q}{\mu(Q)}\right)^{s'}\mu(Q)\\&=\sum_{Q\subset P} \alpha_Q^{s'}\mu(Q)^{1-s'} =\sum_{Q\subset P} \lambda_Q^s \mu(Q)^{1-s'} \left( 
\int_Q \frac{d\mu}{\left( \sum_{R\in{\mathcal D}} \lambda_R^s \chi_R\right)^{1/s'}}\right)^{s'}\\&
\leq \sum_{Q\subset P} \lambda_Q^s \int_Q \frac{d\mu}{\sum_{R\in{\mathcal D}}\lambda_R^s \chi_R}\leq \int_P d\mu=\mu(P).
\end{align*}
Consequently, for the set of chosen $(\alpha_Q)_Q$ we have that
\begin{equation*}\begin{split}&
A_1(\Lambda)=\sum_{Q\in{\mathcal D}} \lambda_Q\alpha_Q\\& \leq \frac{\sum_{Q\in{\mathcal D}} \lambda_Q \alpha_Q}{\sup_{P\in{\mathcal D}}\left( \frac1{\mu(P)} \sum_{Q\subset P} \alpha_Q^{s'} \mu(Q)^{-1/(s-1)}\right)^{1/s'}} \leq A_2(\Lambda).
\end{split}\end{equation*}

\end{proof}
\begin{rem}\label{rem:remarks=1}
This lemma is also true for $s=1$ and $s=\infty$, but we do not include the proof, since it will not be necessary for our purposes.
\end{rem}
As a consequence of Theorem \ref{thm:dor} and Lemma \ref{lem:lemma00}, we have the following result.

\begin{lem}\label{lem:dorverbitsky}
Let $(b_Q)_Q$ be a sequence of non-negative real numbers. Let $0<q<\infty$ and $q\leq s\leq\infty$. Let $\mu$ be a positive locally finite Borel measure with no point masses. Then the following assertions are equivalent:
\begin{enumerate}
\item\label{item:dorverbitsky1}  $\|\left( \sum_Q b_Q^s\chi_Q \right)^{1/s} \|_{L^q(\mu)} \lesssim C.$
\item\label{item:dorverbitsky2} Define $\tilde{s}:=s/q$. Then for every collection $(E_Q)_Q$ of pairwise disjoint sets with $E_Q\subset Q$,
$$
\left( \sum_Q b_Q^q \mu(E_Q)^{1/\tilde{s}'} \mu(Q)^{1/{\tilde{s}}}\right)^{1/q} \lesssim C,
$$

where here $\lesssim$ means that the constants involved  may depend on $s$, but not on the sequence $(b_Q)_Q$.
\end{enumerate}
\end{lem}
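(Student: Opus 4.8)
The idea is to deduce the equivalence from Lemma~\ref{lem:lemma00} and Theorem~\ref{thm:dor} after a change of variables. Set $\lambda_Q:=b_Q^q$ and discard the cubes with $\mu(Q)=0$, which contribute nothing to either condition. Since $s=q\,\tilde s$, raising the left-hand side of \eqref{item:dorverbitsky1} to the power $q$ gives
$$\left\|\left(\sum_Q b_Q^s\chi_Q\right)^{1/s}\right\|_{L^q(\mu)}^{q}=\left\|\left(\sum_Q\lambda_Q^{\tilde s}\chi_Q\right)^{1/\tilde s}\right\|_{L^1(\mu)}=A_1(\Lambda),$$
where $A_1$ is the functional of Lemma~\ref{lem:lemma00} with $s$ replaced by $\tilde s$ and $\Lambda=(\lambda_Q)_Q$. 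For $q<s<\infty$ we have $1<\tilde s<\infty$ and the lemma applies directly; the endpoint cases $s=q$ (where both \eqref{item:dorverbitsky1} and \eqref{item:dorverbitsky2} reduce to $\sum_Q b_Q^q\mu(Q)\lesssim C^q$) and $s=\infty$ are handled in the same way using Remark~\ref{rem:remarks=1}, so from now on assume $1<\tilde s<\infty$. By Lemma~\ref{lem:lemma00} we have $A_1(\Lambda)\approx A_2(\Lambda)$, so it suffices to prove $A_2(\Lambda)\approx D^q$, where $D^q$ denotes the supremum over all pairwise disjoint families $(E_Q)$, $E_Q\subset Q$, of $\sum_Q\lambda_Q\,\mu(E_Q)^{1/\tilde s'}\mu(Q)^{1/\tilde s}$; this is exactly condition \eqref{item:dorverbitsky2} raised to the power $q$.

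For the estimate $D^q\le A_2(\Lambda)$ I would test the definition of $A_2$ against the sequence $\alpha_Q:=\mu(E_Q)^{1/\tilde s'}\mu(Q)^{1/\tilde s}$, for a fixed pairwise disjoint family $(E_Q)$. Since $1/\tilde s-1=-1/\tilde s'$, one computes $(\alpha_Q/\mu(Q))^{\tilde s'}\mu(Q)=\mu(E_Q)$, whence for every $P\in{\mathcal D}$,
$$\frac1{\mu(P)}\sum_{Q\subset P}\left(\frac{\alpha_Q}{\mu(Q)}\right)^{\tilde s'}\mu(Q)=\frac1{\mu(P)}\sum_{Q\subset P}\mu(E_Q)\le 1$$
by pairwise disjointness. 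Thus the denominator in the definition of $A_2(\Lambda)$ is $\le 1$ for this $\alpha$, so $\sum_Q\lambda_Q\alpha_Q\le A_2(\Lambda)$; taking the supremum over $(E_Q)$ gives $D^q\le A_2(\Lambda)$. Combined with $A_2(\Lambda)\lesssim A_1(\Lambda)=\|(\sum_Q b_Q^s\chi_Q)^{1/s}\|_{L^q(\mu)}^q$ from Lemma~\ref{lem:lemma00}, this proves \eqref{item:dorverbitsky1}$\Rightarrow$\eqref{item:dorverbitsky2}.

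The reverse estimate $A_2(\Lambda)\le D^q$ is where Theorem~\ref{thm:dor} enters. Fix a non-negative sequence $(\alpha_Q)$ with $\alpha_Q=0$ whenever $\mu(Q)=0$, and let $N$ be its denominator in $A_2$, so that $\sum_{Q\subset P}(\alpha_Q/\mu(Q))^{\tilde s'}\mu(Q)\le N^{\tilde s'}\mu(P)$ for every $P$ (we may assume $0<N<\infty$). Put $c_Q:=(\alpha_Q/\mu(Q))^{\tilde s'}\mu(Q)=\alpha_Q^{\tilde s'}\mu(Q)^{1-\tilde s'}$; the displayed inequality says precisely that $(c_Q)_Q$ satisfies the Carleson condition with constant $N^{\tilde s'}$. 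Since $\mu$ has no point masses, Theorem~\ref{thm:dor} yields pairwise disjoint sets $E_Q\subset Q$ with $c_Q\le N^{\tilde s'}\mu(E_Q)$, i.e.\ $\alpha_Q\le N\,\mu(E_Q)^{1/\tilde s'}\mu(Q)^{1/\tilde s}$ (using $(\tilde s'-1)/\tilde s'=1/\tilde s$). Therefore
$$\sum_Q\lambda_Q\alpha_Q\le N\sum_Q\lambda_Q\,\mu(E_Q)^{1/\tilde s'}\mu(Q)^{1/\tilde s}\le N\,D^q,$$
the last step by hypothesis \eqref{item:dorverbitsky2}. Dividing by $N$ and taking the supremum over $(\alpha_Q)$ gives $A_2(\Lambda)\le D^q$, so $A_1(\Lambda)\le A_2(\Lambda)\le D^q\lesssim C^q$ by Lemma~\ref{lem:lemma00}, and a $q$-th root completes \eqref{item:dorverbitsky2}$\Rightarrow$\eqref{item:dorverbitsky1}. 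The only delicate points are the bookkeeping at the endpoints $\tilde s\in\{1,\infty\}$ and the harmless removal of the cubes with $\mu(Q)=0$; the substantive input is the Carleson-to-sparse implication of Theorem~\ref{thm:dor}, which is exactly what forces the hypothesis that $\mu$ has no point masses.
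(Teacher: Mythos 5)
Your proposal is correct and follows essentially the same route as the paper: reduce to an $L^1(\mu)$ statement by raising to the power $q$, apply the duality Lemma~\ref{lem:lemma00} with exponent $\tilde s$, and then convert the Carleson-type denominator into the sparse/disjoint-sets form via Theorem~\ref{thm:dor} (the paper packages this last step as Corollary~\ref{cor:lambda_1=lambda_2}, and handles the endpoints $s=q$, $s=\infty$ by the same direct observations you make). Your write-up is in fact somewhat more explicit than the paper's in separating the two directions of the Carleson--sparse exchange.
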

\begin{proof}
Notice that the endpoint cases are trivial: for $s=q$ the expressions coincide, and for $s=\infty$ (in which case the summation is interpreted as the supremum), the assertion is clear by linearizing the supremum (we can write $\sup_Q b_Q\chi_Q=\sum_Q b_Q \chi_{E_Q}$ for some pairwise disjoint sets $E_Q\subset Q$).

Assume now that $q<s<\infty$. Taking $\tilde{s}:=s/q$, \eqref{item:dorverbitsky1} can be rewritten as

$$\|\left( \sum_Q (b_Q^q)^{\tilde{s}}\chi_Q \right)^{1/\tilde{s}} \|_{L^1(\mu)} \lesssim C^q.$$

By Lemma \ref{lem:lemma00}, \eqref{item:dorverbitsky1} holds if and only if for every sequence $(\alpha_Q)_Q$ of none-negative reals such that $\alpha_Q=0$ if $\mu(Q)=0$ we have
$$
\sum_Q b_Q^q\alpha_Q \lesssim C^q \sup_P \left(  \frac1{\mu(P)} \sum_{Q\subset P} \left( \frac{\alpha_Q}{\mu(Q)}\right)^{\tilde{s}'} \mu(Q)\right)^{1/{\tilde{s'}}}.
$$

Next, Corollary \ref{cor:lambda_1=lambda_2} gives that the above estimate holds if and only if for every family $(E_Q)_Q$ of pairwise disjoint sets, $E_Q\subset Q$, we have that
\begin{align*}&
\sum_Q b_Q^q\alpha_Q \\&\lesssim C^q \sup_Q \left(  \frac{\left( \frac{\alpha_Q}{\mu(Q)}\right)^{\tilde{s}'}\mu(Q)}{\mu(E_Q)}\right)^{1/\tilde{s'}}=  C^q \sup_Q \frac{\alpha_Q}{\mu(Q)^{1/{\tilde s}}\mu(E_Q)^{1/{{\tilde s}'}}}.
\end{align*}

We define
$$\beta_Q:=\frac{\alpha_Q}{\mu(Q)^{1/{\tilde s}}\mu(E_Q)^{1/{{\tilde s}'}} }.$$

We then have,
$$
\sum_Q b_Q^q \mu(E_Q)^{1/{\tilde{s}}'} \mu(Q)^{1/{\tilde{s}}}\beta_Q \lesssim C^q\sup_Q \beta_Q,
$$
which is equivalent to,
$$
\left( \sum_Q b_Q^q \mu(E_Q)^{1/{\tilde{s}}'} \mu(Q)^{1/{\tilde{s}}} \right)^q\lesssim C.
$$

\end{proof}

\subsection{Proof of Theorem \ref{thm:reduction}}

\begin{proof}
The condition \eqref{eqn:qpduality} says that
$$\|\left( \sum_Q \lambda_Q^s\left( \int_Q fd\sigma\right)^s\chi_Q \right)^{1/s} \|_{L^q(\mu)} \leq C\|f\|_{L^p(\sigma)}.$$

Since $q\leq s\leq\infty$, by Lemma \ref{lem:dorverbitsky} this estimate holds, if and only if for every collection $(E_Q)_Q$ of pairwise disjoint sets with $E_Q\subset Q$, and denoting ${\tilde s}=s/q$,
$$
\left( \sum_Q \lambda_Q^q \left(\int_Qfd\sigma \right)^q\mu(E_Q)^{1/{\tilde{s}}'} \mu(Q)^{1/{\tilde{s}}}\right)^{1/q} \leq C\|f\|_{L^p(\sigma)}.
$$
By Lemma \ref{lemma1},  scaling the index, since $q<p$, the above is equivalent  is equivalent to
$$\sum_Q\lambda_Q^q \left(\frac{\int_Qfd\sigma}{\sigma(Q)} \right) \sigma(Q)^q\mu(E_Q)^{1/{\tilde{s}}'}\mu(Q)^{1/{\tilde{s}} }\leq C^q \|f\|_{L^{p/q}(\sigma)},$$
estimate that can be rewritten as
$$\int_{\R^n} \left(\sum_Q \frac{(\lambda_Q \sigma(Q))^q\mu(E_Q)^{1/{\tilde{s}}'}\mu(Q)^{1/{\tilde{s}}}\chi_Q}{\sigma(Q)} \right) fd\sigma\leq C^q \|f\|_{L^{p/q}(\sigma)},$$

which by duality is equivalent to
\begin{equation}\label{eqn:equationdual1}
\left\| \left(\sum_Q \frac{(\lambda_Q \sigma(Q))^q\mu(E_Q)^{1/{\tilde{s}}'}\mu(Q)^{1/{\tilde{s}}}\chi_Q}{\sigma(Q)} \right)    \right\|_{L^{(p/q)'}(\sigma)}\leq C^q.
\end{equation}

\end{proof}

\begin{rem}
Observe that if $s=q$ (equivalently, ${\tilde s}=1$), \eqref{eqn:equationdual1} says simply that
$$\left\| \sum_{Q\in{\mathcal D}} \frac{(\lambda_Q\sigma(Q))^q\mu(Q)}{\sigma(Q)}\chi_Q \right\|_{L^{{\tilde p}'}(\sigma)}\leq C,$$
which is the trivial condition for ${\tilde s}=1$ and ${\tilde p}>1$ that can be obtained directly by duality.
In the other extreme case $s=\infty$, which corresponds to the dyadic maximal function, the condition is that for every subcollection of pairwise disjoint sets $E_Q\subset Q$
$$\left\| \sum_{Q\in{\mathcal D}} \frac{(\lambda_Q\sigma(Q))^q\mu(E_Q)}{\sigma(Q)}\chi_Q \right\|_{L^{{\tilde p}'}(\sigma)}\leq C.$$\end{rem}

\begin{rem}\label{rem:rem2}
We observe that if $T$ is such that there exists a subcollection of cubes ${\mathcal S}$  $\mu$-sparse and $T(f)=\left(\lambda_Q \left(\int_Q fd\sigma\right)\chi_Q \right)_{Q\in {\mathcal S}}$.
For this operator Theorem \ref{thm:reduction} gives that $T$ is bounded from $L^p(\sigma)$ to $L_{{l^s}}^q(\mu)$ if and only if
$$\left\| \sum_{Q\in{\mathcal S}} \frac{(\lambda_Q\sigma(Q))^q\mu(Q)}{\sigma(Q)}\chi_Q \right\|_{L^{{\tilde p}'}(\sigma)}\leq C. $$
Observe that this condition is independent of $s$.
\end{rem}

\section{Application to Wolff potentials}    

If $\alpha,s>0$, $0<\alpha <n$, we consider the dyadic Wolff's potential defined by
\begin{equation*}
{\mathcal W}_{\alpha,\,s}^{\mathcal D}(f)(x)=\sum_{Q \in{\mathcal D}} \left( \frac{\int_Q fdx}{|Q|^{1-\alpha/n}}\right)^s\chi_Q(x),
\end{equation*}
where $|Q|$ denotes the Lebesgue measure.

If $1<q<p$ and $q\leq s$, the question that we want to consider is the following: which are the measures $\mu,\sigma$ such that
\begin{equation}\label{eqn:wolffestimate}
\|{\mathcal W}_{\alpha,\,s}^{\mathcal D}(f)^{1/s}\|_{L^q(\mu)} \lesssim \|f\|_{L^p(\sigma)}?
\end{equation}
In \cite{cascanteortegaverbitsky} it is given the relationship of Wolff's potential with the Riesz potentials and some applications.

Theorem \ref{thm:reduction} gives that, if $\mu$ has no point masses,  \eqref{eqn:wolffestimate} holds if and only if there exists $C>0$ such that for any $(E_Q)$ pairwise disjoint and such that $E_Q\subset Q$,
$$\left\| \sum_{Q\in{\mathcal D}} \frac{(|Q|^{\alpha/n-1}\sigma(Q))^q\mu(Q)^{1/{\tilde{s}}} \mu(E_Q)^{1/{\tilde s}'}}{\sigma(Q)}\chi_Q \right\|_{L^{{\tilde p}'}(\sigma)}\leq C, $$
where ${\tilde s}=s/q$ and ${\tilde p}=p/q$.

For the particular case where $\mu$ is an $A_\infty$ weight,  we have:
\begin{cor}
let $\mu$ be an $A_\infty$-weight. Estimate \eqref{eqn:wolffestimate} holds if and only if there exists a constant $C>0$ such that for any subcollection ${\mathcal S}$ of dyadic cubes, $\mu$-sparse,
$$\left\| \sum_{Q\in{\mathcal S}} \frac{(|Q|^{\alpha/n-1}\sigma(Q))^q\mu(Q)}{\sigma(Q)}\chi_Q \right\|_{L^{{\tilde p}'}(\sigma)}\leq C. $$
\end{cor}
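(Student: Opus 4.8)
The plan is to deduce the corollary from the general characterization of \eqref{eqn:wolffestimate} stated just above it, namely the condition that for every collection $(E_Q)_Q$ of pairwise disjoint sets with $E_Q\subset Q$,
\begin{equation*}
\left\| \sum_{Q\in{\mathcal D}} \frac{(|Q|^{\alpha/n-1}\sigma(Q))^q\mu(Q)^{1/{\tilde s}} \mu(E_Q)^{1/{\tilde s}'}}{\sigma(Q)}\chi_Q \right\|_{L^{{\tilde p}'}(\sigma)}\leq C.
\end{equation*}
The key extra input is that $\mu\in A_\infty$, which allows one to pass from the full mass $\mu(Q)$ to the mass $\mu(E_Q)$ of a piece, provided the pieces are large in the Lebesgue sense. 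First I would show the ``only if'' direction: given \eqref{eqn:wolffestimate}, the stated condition above holds for \emph{all} choices of disjoint $(E_Q)$; now for a $\mu$-sparse subcollection ${\mathcal S}$ pick, for each $Q\in{\mathcal S}$, the associated set $E_Q\subset Q$ with $\mu(E_Q)\geq \tfrac12\mu(Q)$, and set $E_Q=\emptyset$ for $Q\notin{\mathcal S}$. Then $\mu(Q)^{1/{\tilde s}}\mu(E_Q)^{1/{\tilde s}'}\approx \mu(Q)$ with constants depending only on the sparseness constant, so the displayed bound with this choice of $(E_Q)$ is equivalent, up to constants, to
\begin{equation*}
\left\| \sum_{Q\in{\mathcal S}} \frac{(|Q|^{\alpha/n-1}\sigma(Q))^q\mu(Q)}{\sigma(Q)}\chi_Q \right\|_{L^{{\tilde p}'}(\sigma)}\leq C,
\end{equation*}
which is the asserted condition. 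Note this half does not use $A_\infty$.

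For the ``if'' direction, I would start from an arbitrary collection $(E_Q)_Q$ of pairwise disjoint sets and try to dominate $\mu(E_Q)^{1/{\tilde s}'}\mu(Q)^{1/{\tilde s}}$ by $\mu(\widetilde E_Q)$ for a \emph{new} collection of pairwise disjoint sets $\widetilde E_Q$, forming a $\mu$-sparse subfamily, at the cost of a fixed constant. The natural device is the Calder\'on--Zygmund / John--Nirenberg machinery attached to $A_\infty$: for each $Q$, since $\mu\in A_\infty$, there is $\delta\in(0,1)$ with $\mu(E)\geq \delta\mu(Q)$ whenever $|E|\geq \tfrac12|Q|$; conversely the reverse H\"older inequality controls $\mu$ on small Lebesgue pieces. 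One decomposes each $Q$ according to whether the pieces $E_Q$ occupy a definite Lebesgue proportion of $Q$ or not. When $|E_Q|\geq c|Q|$ one gets $\mu(E_Q)\approx \mu(Q)$ directly and the term is already of the desired sparse form. When $|E_Q|$ is small, the contribution of such $Q$'s can be absorbed: using that $\mu\in A_\infty$ means $d\mu = w\,dx$ with $w$ satisfying a reverse H\"older inequality, so $\mu(E_Q)^{1/{\tilde s}'}\mu(Q)^{1/{\tilde s}}$ on a thin piece is small compared to $\mu(Q)$, and one reorganizes the sum over the dyadic tree into $O(1)$ many sparse subfamilies via a standard stopping-time argument (stop when the accumulated Lebesgue measure of the selected children exceeds $\tfrac12$ of the parent). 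Summing the $L^{{\tilde p}'}(\sigma)$ quasi-norms of these $O(1)$ pieces, each of which is bounded by $C$ by the hypothesis on $\mu$-sparse collections, closes the estimate. Equivalently, and perhaps more cleanly, one can invoke Theorem~\ref{thm:dor} / Corollary~\ref{cor:lambda_1=lambda_2} in the reverse direction: the $\mu$-sparse condition is, modulo constants, the Carleson condition for the coefficients, and the $A_\infty$ hypothesis is exactly what lets one re-split a general disjoint family $(E_Q)$ into a controlled number of sparse ones.

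I expect the main obstacle to be this re-splitting step in the ``if'' direction: one must show that the supremum over \emph{all} pairwise disjoint families $(E_Q)$ of the left-hand side is controlled by the supremum over $\mu$-sparse subcollections only. This is precisely where $A_\infty$ enters (the remark after Theorem~\ref{thm:dor} shows it can fail when $\mu$ has point masses, and one should check the $A_\infty$ hypothesis rules out the relevant pathologies), and it requires a careful stopping-time decomposition of the dyadic tree together with the reverse H\"older inequality for the $A_\infty$ weight to handle the cubes whose selected piece is Lebesgue-thin. The rest is bookkeeping with the exponents $\tilde s = s/q$ and $\tilde p = p/q$ and the elementary inequality $\mu(Q)^{1/{\tilde s}}\mu(E_Q)^{1/{\tilde s}'}\leq \mu(Q)$.
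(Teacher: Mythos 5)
Your ``only if'' direction is correct and is essentially the paper's argument: restrict attention to a $\mu$-sparse ${\mathcal S}$, choose for each $Q\in{\mathcal S}$ the sparse set $E_Q$ so that $\mu(Q)^{1/\tilde s}\mu(E_Q)^{1/\tilde s'}\approx\mu(Q)$, and invoke Theorem \ref{thm:reduction} (this is exactly Remark \ref{rem:rem2}); as you note, $A_\infty$ plays no role there.

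The ``if'' direction has a genuine gap. You propose, for an \emph{arbitrary} pairwise disjoint family $(E_Q)_{Q\in{\mathcal D}}$, to dominate the coefficients $\mu(Q)^{1/\tilde s}\mu(E_Q)^{1/\tilde s'}$ by reorganizing the sum over the whole dyadic tree into $O(1)$ many $\mu$-sparse subfamilies. This cannot work: the sum in Theorem \ref{thm:reduction} runs over \emph{all} of ${\mathcal D}$, and ${\mathcal D}$ is never a finite union of Carleson (hence sparse) families, since each dyadic generation partitions $P$ and so $\sum_{Q\subset P}\mu(Q)=\infty$ whenever $\mu(P)>0$. The only decay available is the factor $\mu(E_Q)^{1/\tilde s'}$, and if you stratify by $\mu(E_Q)\approx 2^{-k}\mu(Q)$, the $k$-th layer is sparse only with constant $2^{-k}$, so it splits into roughly $2^k$ genuinely sparse families while the coefficient gain is only $2^{-k/\tilde s'}$; summing the $L^{\tilde p'}(\sigma)$ norms then produces $\sum_k 2^{k/\tilde s}$, which diverges for every finite $\tilde s>1$. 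The reverse H\"older inequality does not repair this. The paper's route is different, and this difference is the whole point of the corollary: one does \emph{not} pass from the sparse testing condition back to the general disjoint-family condition at the level of coefficients. Instead one dominates the operator itself: by the lemma following the corollary (Cruz-Uribe's pointwise sparse domination), for each $f$ there is a family ${\mathcal S}$, sparse with respect to \emph{Lebesgue} measure, with ${\mathcal W}^{\mathcal D}_{\alpha,s}(f)\lesssim{\mathcal W}^{\mathcal S}_{\alpha,s}(f)$; the $A_\infty$ hypothesis is used precisely (and only) to upgrade Lebesgue-sparseness of ${\mathcal S}$ to $\mu$-sparseness; and Remark \ref{rem:rem2}, applied to the operator restricted to ${\mathcal S}$, then yields the boundedness from the hypothesized sparse testing condition. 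That pointwise domination step is the missing idea you would have to supply.
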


\begin{proof}

We first observe that if $\mu$ is an $A_\infty$ weight, then any subcollection ${\mathcal S}$ sparse with respect to the Lebesgue measure is also $\mu$-sparse. Indeed, recall that a measure $\mu$ is in $A_\infty$ if for each $0<\gamma<1$ there exists $0<\delta<1$ such that for each $Q$ and any subset $E\subset Q$, $|E|\subset \gamma|Q|$, then $\mu(E)\leq \delta\mu(Q)$.
 Then the proof of the corollary  is an immediate a consequence of Remark \eqref{rem:rem2}  and the following lemma. 

\begin{lem}
Let ${\mathcal D}$ be a dyadic grid in $\R^n$ and a non-negative compactly supported integrable function $f$. There exists a subcollection ${\mathcal S}$ of dyadic cubes, sparse with respect to the Lebesgue measure such that
 
$$ {\mathcal W}_{\alpha,\,s}^{\mathcal D}(f)\lesssim {\mathcal W}_{\alpha,\,s}^{\mathcal S}(f).$$
Of course, for every subcollection ${\mathcal S}_1$ of dyadic cubes, sparse with respect to the Lebesgue measure, we also have
$${\mathcal W}_{\alpha,\,s}^{{\mathcal S}_1}(f)\leq {\mathcal W}_{\alpha,\,s}^{\mathcal D}(f).$$
\end{lem}
\begin{proof}
The proof follows closely the proof  in Proposition 3.8 in \cite{cruz-uribe}, where a similar pointwise estimate is obtained for a fractional  operator, that is, $s=1$.
It gives a pointwise estimate of the dyadic Wolff's potential in terms of dyadic Wolff's potentials defined on subcollections of dyadic cubes, sparse with respect to the Lebesgue measure.
\end{proof}
\begin{rem}
Let $\mathcal{W}_{\alpha,\,s}(f)$ be the continuous Wolff potential given by
$$\mathcal{W}_{\alpha,\,s}(f)(x)=\int_0^{+\infty}\left(\frac{\int_{B(x,t)}fdx}{t^{n-\alpha }}\right)^s\frac{dt}{t},\qquad x\in\R^n.$$

We have that for any ${\mathcal D}$ dyadic grid on $\R^n$, ${\displaystyle {\mathcal W}_{\alpha,\,s}^{\mathcal D}(f) \lesssim {\mathcal W}_{\alpha,\,s}(f)}$ and, on the other hand, there exists ${\mathcal D}_i$, $i=1,\dots M$, families of dyadic grids on $\R^n$ such that ${\displaystyle {\mathcal W}_{\alpha,\,s}(f) \lesssim \sum_{i=1}^M {\mathcal W}_{\alpha,\,s}^{{\mathcal D}_i}(f)}$. Thus the above corollary gives a necessary and sufficient condition for the boundedness of $({\mathcal W}_{\alpha,\,s}(f))^{1/s}$ from $L^p(\sigma)$ to $L^q(\mu)$.
\end{rem}
\end{proof}

\end{document}